\newtheorem{theorem}{Theorem}[section]
\newtheorem{corollary}[theorem]{Corollary}
\newtheorem{lemma}[theorem]{Lemma}
\newtheorem{proposition}[theorem]{Proposition}
\newtheorem{definition}[theorem]{Definition}
\newtheorem{example}[theorem]{Example}
\newtheorem{remark}[theorem]{Remark}
\newcommand{\cal}{\mathcal}
\begin{document}

\title{Extensions of Augmented Racks and 
Surface Ribbon Cocycle Invariants}

\author{Masahico Saito} 
\address{Department of Mathematics, 
	University of South Florida, Tampa, FL 33620, U.S.A.} 
\email{saito@usf.edu} 

\author{Emanuele Zappala} 
\address{Yale School of Medicine, Yale University\\
	300 George Street, New Haven, CT 06511, U.S.A.} 
\email{emanuele.zappala@yale.edu \\ zae@usf.edu}

\begin{abstract}
A rack is a set with a  
binary operation 
that is right-invertible and self-distributive, properties diagrammatically corresponding to Reidemeister moves II and III, respectively. 
A rack is said to be an {\it augmented rack} if
 the operation is written by a group action.
Racks and their cohomology theories have been extensively used for knot 
and knotted surface
invariants. 
Similarly to group cohomology, rack 2-cocycles relate to extensions, 
and a natural question that arises is to characterize the extensions of augmented racks that are themselves augmented racks.
In this paper, we 
characterize such  extensions 
 in terms of what we call {\it fibrant and additive} cohomology of racks. 
 Simultaneous extensions of racks and groups are considered, where the respective $2$-cocycles are related through a certain formula.
 Furthermore, we construct coloring and cocycle  invariants for compact orientable surfaces with boundary 
in 
 ribbon forms  embedded in $3$-space.
\end{abstract}

\maketitle

\date{\empty}

\tableofcontents

\section{Introduction}

Self-distributive (binary) operations have been used since the 1980's to construct invariants of knots and links, following the articles \cite{Joyce,Mat}, where the notion of {\it fundamental quandle} was introduced,
defined topologically and diagrammatically. Homology and cohomology theories of quandles were  introduced, and used to construct 
 invariants of links in $3$-space, as well as knotted surfaces in $4$-space \cite{CJKLS}. These invariants are defined via certain state sum using quandle 2-cocycles, roughly described in the case of links in $3$-space as follows. The initial data of the construction is a quandle $X$, along with a $2$-cocycle of $X$ with coefficients in an abelian group $A$. First, one defines the set of $X$-colorings of a fixed diagram $D$ of a link $L$ as the set of homomorphisms from the fundamental quandle of $L$ (obtained through $D$) to $X$. 
 A coloring is also regarded as an assignment of elements of $X$ to arcs of $D$, and assigned elements are called {\it colors}.
  For each coloring, then, one takes the Boltzmann weights of each crossing of $D$, where the $2$-cocycle is evaluated,   $\phi(x_\tau, y_\tau)$,  where 
  $\phi$ is a 2-cocycle and $(x_\tau, y_\tau)$ is a certain pair of colors specified at the crossing $\tau$,
  see Figure~\ref{fig:positivenegative}.
   Then 
for each coloring,  
all these weights are multiplied together 
over all crossings, to obtain $\Pi_\tau \phi(x_\tau, y_\tau) \in A$.
Upon summing over all $X$-colorings, 
this quantity  $\sum_{\mathcal C} \Pi_\tau \phi(x_\tau, y_\tau) $ results to be invariant with respect to Reidemeister moves and, therefore, is independent of the choice of diagram of $D$.

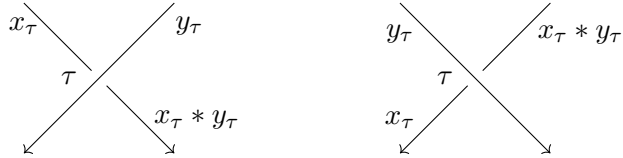
\begin{figure}[hbt]
	\begin{center}
		\begin{tikzpicture}
		\draw[->] (2,2) -- (0,0);
		\draw (0,2)--(0.9,1.1);
		\draw[->] (1.1,0.9) -- (2,0);
		
		\draw[->] (5,2)--(7,0); 
		\draw (7,2)--(6.1,1.1);
		\draw[->] (5.9,0.9)--(5,0);
		
		\node at (0,1.7) {$x_\tau$};
		\node  at (2.3,0.45) {$x_\tau*y_\tau$};
		\node at (2.2,1.7) {$y_\tau$};
		\node at (0.6,1) {$\tau$};
		
		\node  at (5,0.45) {$x_\tau$};
		\node  at  (5,1.6) {$y_\tau$};
		\node at (7.4,1.59) {$x_\tau*y_\tau$};
		\node at (5.6,1) {$\tau$};
		\end{tikzpicture}
	\end{center}
	\caption{Positive (left) and negative (right) crossings and their colorings for binary quandles.}
	\label{fig:positivenegative}
\end{figure}

Algebraically,   quandle 2-cocycles provide certain extensions of quandles in parallel to extensions by group 2-cocycles,
and there are bijective correspondences between equivalence classes of extensions 
and second quandle cohomology group \cite{CENS,CEStwisted}.
Relations and applications of algebraic theories of quandle extensions have been also obtained;
extensions provide 2-cocycles that are used for constructing cocycle invariants, and interpretations of cocycle invariants are provided in terms of obstructions of extending colorings to extensions
\cite{CENS,CEStwisted,ClaSa}.  Moreover, the relation between the extensions of certain quandles obtained from inner automorphisms of a group and the group itself has been studied in \cite{BaeCarterKim}, where homomorphisms between the cohomology groups are explicitly given.
 
These cocycle invariants have been generalized to invariants for  framed links via  {\it racks}
\cite{FR,SZframedlinks,EZ}, 
trivalent graphs for handlebody-links \cite{IIJO,CIST} and for 
 surface ribbons (orientable compact surfaces with boundary in the form of ribbons) embedded in 3-space 
  \cite{SZsfceribbon}.
In particular, in \cite{CIST,SZsfceribbon}, cocycle weights are also assigned to trivalent vertices, 
that are group 2-cocycles, and for the purpose of defining invariants, algebraic structures and cohomology theories that have both associative and self-distributive operations
in certain compatible manners have been developed
(see also \cite{CLY,Lebed}).

An {\it augmented rack} \cite{FR} is a rack $X$ with a map $\nu$ to a group $G$ acting on $X$, 
with certain conditions (see below).
We  could 
view augmented racks  as an algebraic structure with a self-distributive operation of the rack $X$ and the group structure of $G$ 
intertwined. 
From this point of view, we  provide two extension theories of augmented racks, and point out that the
corresponding 2-cocycles produce invariants for surface ribbons through trivalent graph
diagrams. 
As in \cite{CIST,SZsfceribbon}, rack cocycles are assigned to crossings and group cocycles are assigned to trivalent vertices. 
The purpose of this paper is threefold:
(1) define mixed cocycle conditions of cohomology groups in dimension 2
for group and rack 2-cocycles,
(2) provide algebraic 
 characterizations
 of these mixed cocycle conditions in terms of extensions, and
(3) provide a definition of surface ribbon cocycle invariants via trivalent graphs utilizing the mixed 2-cocycles at crossings and trivalent vertices. 
For these goals, we focus on 2-cocycles of racks and groups,
 instead of aiming to formulate a general homology theory. 

The paper is organized as follows.
In Section~\ref{sec:pre}, preliminary materials necessary to this paper are reviewed.
Extensions of augmented racks and properties of  their 2-cocycles are studied in Section~\ref{sec:ext},  and examples of extensions and cocycles are described.
In Section~\ref{sec:inv}  the construction of invariants  using these extension cocycles under certain additional conditions  is given.

\section{Preliminaries}\label{sec:pre}

In this section we review materials used in this paper.

\subsection{Augmented racks}

A {\it rack} is a set $X$ with a binary operation $(x, y ) \mapsto x*y =: S_y(x)$, where $S_y: X \rightarrow X$ is regarded as a map associated to $y$,  such that $S_y$ is bijective for all $y \in X$, and  $*$ is
right self-distributive, $(x*y)*z=(x*z)*(y*z)$ for all $x,y,z \in X$.
 It follows that $S_y$ is a rack isomorphism, and $X$ is called {\it connected} if  the subgroup of the rack automorphism group generated by $\{ S_y : y \in X\}$, called the inner automorphism group, acts transitively on $X$.
It follows that $x\ \bar{*}\ y:= S_y^{-1} (x)$ is also a rack operation on $X$.
A rack $X$ is called a {\it quandle} if it satisfies $x*x=x$ for all $x \in X$.

An {\it augmented rack}~\cite{FR} $(X, G)$ is a set $X$ with a right group action by  a group $G$ 
and a map $\nu: X \rightarrow G$ satisfying the identity
$\nu (x \cdot g) = g^{-1} \nu (x) g$ for all $x \in X$, $g \in G$, where  the symbol $\cdot$ indicates the group action.
An augmented rack has a rack operation defined by $x*y=x \cdot  \nu(y)$ for $x, y \in X$.
It is also said that $(X, *)$ is a $G$-rack.
It follows that  $x \ \bar{*} \ y = x \cdot \nu (y)^{-1}$.

\subsection{Good involutions}

Let $(X,*)$ be a rack.  The following definitions  are from \cite{Kamada}.
A {\it good involution} is an involution $\rho: X \rightarrow X$ that satisfies
$x * \rho(y)=x \ \bar{*} \ y$ and $\rho(x * y)=\rho(x)$ for all $x, y \in X$.
A rack with a good involution is called a {\it symmetric rack}.
If $X$ is a $G$-rack, then it is a symmetric rack with a good involution defined by 
$\nu( \rho (x) ) = \nu(x)^{-1}$.

\subsection{Group and rack  2-cocycles}

In this paper we focus on 2-cocycles and corresponding extensions of groups and racks, 
so that we review these materials 
in addition to group and rack cohomology theories. References include \cite{Brown,CJKLS}.

Let $G$ be a group and $A$ an abelian group.  
The $n^{\rm th}$ {\it cochain group}  (for the group $G$) is the set of functions $ G^n \rightarrow A$ under pointwise addition, and  is denoted by $C^n_G(G,A)$.
The coboundary operator $\delta^n_G: C^n_G(X,A) \rightarrow  C^{n+1}_{G} (X,A)$  (with trivial action on $A$)
is defined by 
\begin{eqnarray*}
\lefteqn{ ( \delta^n_G f) (x_1, x_2, \ldots, x_{n+1}) =}\\
& & f(x_2, \ldots, x_{n+1}) 
+  \sum_{i=1}^{n} (-1)^i  f (x_1,   \ldots, x_{i-1}, \widehat{x_{i}},  x_{i}  x_{i+1}, x_{i+2}, \ldots  , x_{n+1}) 
+ (-1)^{n+1} f(x_1, \ldots, x_n)
\end{eqnarray*}
for $f \in C^n_G(G,A)$. 
The cocycle group, coboundary group, cohomoligy groups are denoted 
as usually by $Z^n_G(G,A)$, $B^n_G(G,A)$ and $H^n_G(G,A)$, respectively.
For $n=1,2$, the differentials are formulated as 
\begin{eqnarray*}
(\delta^1_G \zeta )( x,y) &=&  \zeta(y) - \zeta(xy) +\zeta (x) ,  \\
(\delta^2_G \eta) (x,y,z) &=& \eta(y,z) + \eta(x, yz) -  \eta(xy,z) -  \eta(x,y) 
\end{eqnarray*}
for $\zeta \in C^1_G(X,A)$ and $\eta \in C^2_G(X,A)$. 

Let $\eta\in Z^2_G(X,A)$. Then $G \times A$ is endowed with a group structure by 
$$(x,a) (y,b) := (xy, a+b+ \eta(x,y))$$
 for all $(x,a), (y,b) \in G \times A$.
This group is called the {\it (central) group extension of $G$ by $A$ with respect to $\eta$}. 

The group 2-cocycle  condition has a well known diagrammatic interpretation as 
triangulation of a square as depicted in Figure~\ref{assoc}.
Three sides of a square are labeled by group elements, and each triangle receives a group 2-cocycle
evaluated by two sides of a triangle. As the figure shows, the two triangulations give rise to the 2-cocycle condition, and corresponds to the associativity, that ensures the extensions to be groups.

\begin{figure}[htb]
\begin{center}
\includegraphics[width=2.8in]{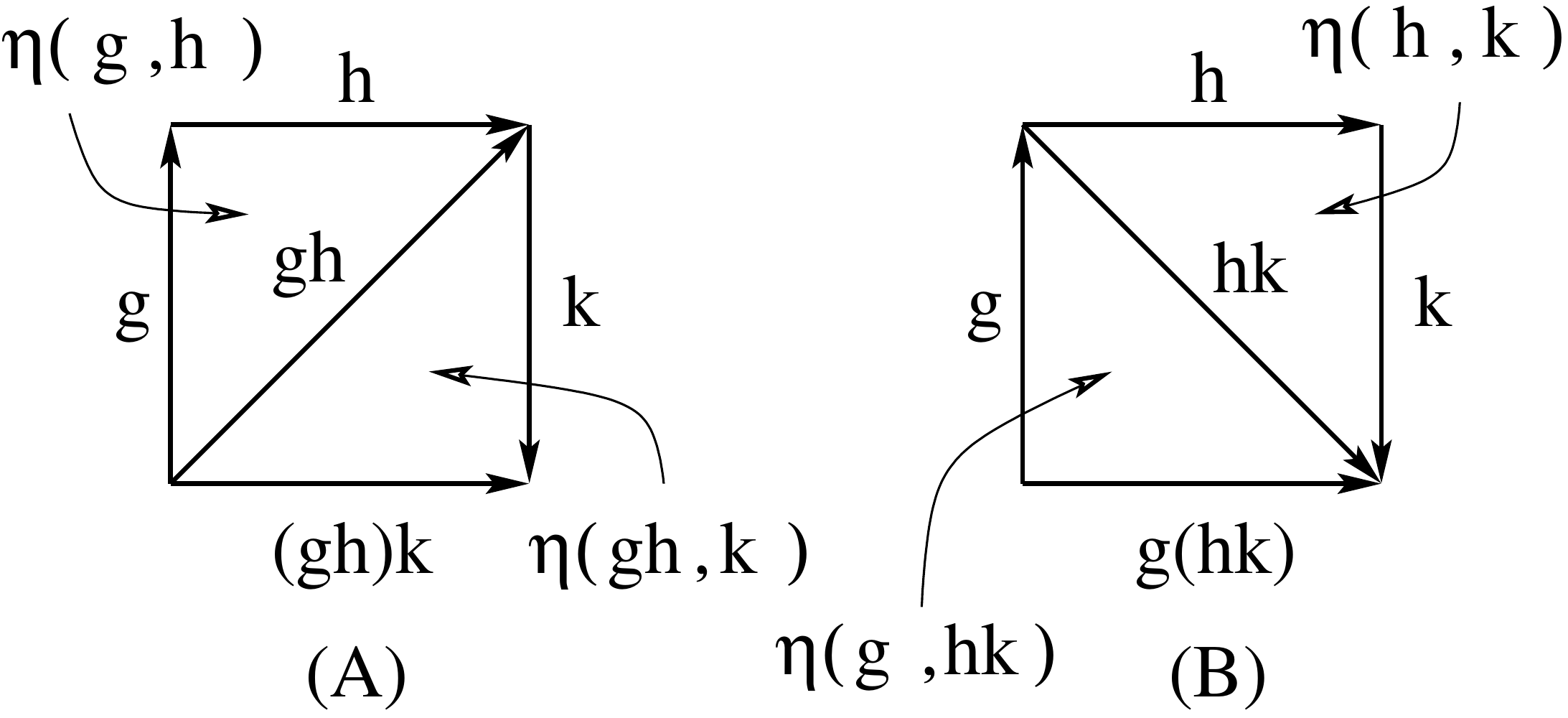}
\end{center}
\caption{Group 2-cocycle condition, triangulations of squares and associativity}
\label{assoc}
\end{figure}

It is computed from the 2-cocycle condition $(\delta^2_G \eta)=0$ that a 2-cocycle satisfies 
$\eta(g,e)=\eta(e,g)$ and $\eta(g,g^{-1})=\eta(g^{-1}, g) $ for $g\in G$ and $e\in G$ is the identity.
These also imply that the identity of the extension $G \times A$ is $(e, - \eta(e,e))$, 
and that $(g,s)^{-1}=(g^{-1}, -s - \eta(e,e))$ for $g \in G$, $s \in A$. 
A group 2-cocycle $\eta$ is called {\it normalized} if it satisfies $\eta(e,e)=0$. Then it follows that 
$\eta(g,e)=\eta(e,g)=\eta(g,g^{-1})=0$ for all $g \in G$. It is known that any second cohomology class has a normalized 2-cocycle representative.

Let $(X,*)$ be a rack and $A$ be an abelian group, and denote the rack operation by $*$.
 The $n^{\rm th}$ {\it cochain group}  (for the rack $X$) is the set of functions $ X^n \rightarrow A$ under pointwise addition, and  is denoted by $C^n_R(G,A)$.
The coboundary operator $\delta^n_R: C^n_{\rm R} (X,A) \rightarrow  C^{n+1}_R (X,A)$ 
is defined by 
\begin{eqnarray*}
\lefteqn{ ( \delta^n_R g) (x_1, x_2, \ldots, x_{n+1 }) =}\\
& &  \sum_{i=1}^{n} (-1)^i [\ (x_1, \ldots, \widehat{x_{i}},\ldots, x_{n+1}) 
- (x_1 *x_i,  \ldots, x_{i-1}*x_i,   \widehat{x_{i}}, x_{i+1}, \ldots  , x_{n+1}) \ ] 
\end{eqnarray*}
for $g \in C^n_R (X,A)$. The cocycle group, coboundary group, cohomoligy groups are denoted 
as usually by $Z^n_R(X,A)$, $B^n_R(X,A)$ and $H^n_R(X,A)$, respectively.
For $n=1,2$, the differentials are formulated as 
\begin{eqnarray*}
(\delta^1_R \xi )( x,y) &=& \xi (x) - \xi(x*y)  ,  \\
(\delta^2_R \phi) (x,y,z) &=& \phi(x,z)  - \phi (x*y,z)  - \phi (x, y) + \phi(x*z, y*z)
\end{eqnarray*}
for $\xi \in C^1_G(X,A)$ and $\phi \in C^2_G(X,A)$.

Let $\phi \in Z^2_R(X,A)$. Then $X \times A$ is endowed with a rack structure by 
$$(x,a)* (y,b) := (xy, a+ \phi(x,y))$$
 for all $(x,a), (y,b) \in G \times A$.
This rack is called the {\it rack extension of $G$ by $A$ with respect to $\phi$}. 

A rack 2-cocycle $\phi \in Z^2_{R}(X,A)$ for a symmetric rack $X$ with a good involution $\rho$ 
and a coefficient abelian group $A$ is called  {\it symmetric} \cite{Kamada} 
if it satisfies 
$$\phi(x, y ) + \phi( \rho(x), y)=0\quad {\rm  and} \quad 
\phi(x,y) + \phi (x*y, \rho(y))=0$$
 for all $x,y \in X$.

\begin{figure}[htb]
\begin{center}
\includegraphics[width=4in]{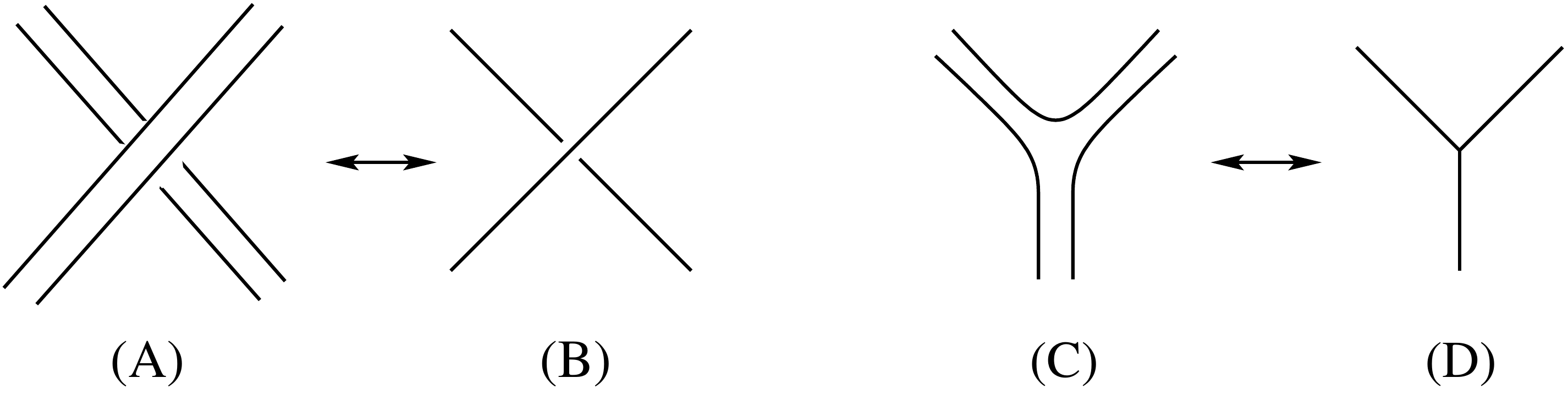}
\end{center}
\caption{Building blocks}
\label{blocks}
\end{figure}

\subsection{Diagrams of surface ribbons  and their moves}

In this section we review diagrams representing compact orientable surfaces with boundary embedded in 3-space (spatial surfaces with boundary). 
 Our discussion is based on \cite{Matsu}. By compact surfaces with boundary, we mean surfaces that are compact and such that each component has a non-empy boundary. 
 Such surfaces 
 are determined by their {\it spines} 
 and framing. 
 Recall that a spine for such a surface $S$ is a trivalent graph $G$ in $S$ such that a closed regular 
 neighborhood  of $G$
in $S$ 
 is a deformation retract of $S$. 
 We therefore represent compact surfaces with boundary, diagrammatically, as  fattened 
 trivalent graphs in ribbon forms.
      We call such representations {\it surface ribbons}.    Thus a surface ribbon is a compact orientable surface with boundary in the form of a thickened  flat trivalent graph. 
      When surface ribbons are embedded in 3-space, we consider its planar diagrams as for knot diagrams. 
Figure~\ref{blocks} (A) and (C) indicate local diagrams of such surface ribbons. 

\begin{figure}[htb]
\begin{center}
\includegraphics[width=1.2in]{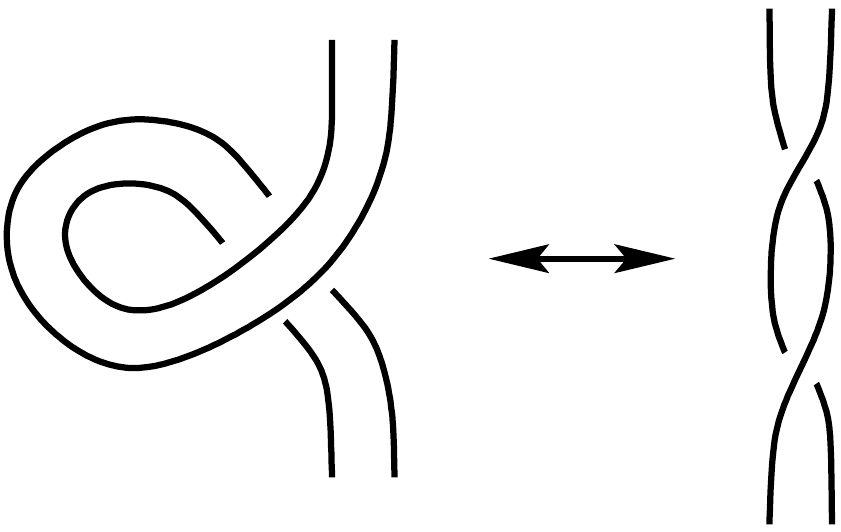}
\end{center}
\caption{A loop corresponds to a full twist}
\label{loop}
\end{figure}

     We further simplify surface ribbons to their spine, trivalent graphs, as depicted in Figure~\ref{blocks} (B) and (D). 
 The ribbons are assumed to be specified by {\it blackboard framing}, 
 where the arcs are fattened to ribbon forms parallel to the plane of projection, as known in framed knot diagrams.
  We focus on orientable surface ribbons.  In this case we can assume that no half-twist 
 occurs in diagrams, and    two half-twists 
  are represented by a small loop as in Figure~\ref{loop}.


\begin{figure}[htb]
\begin{center}
\includegraphics[width=3in]{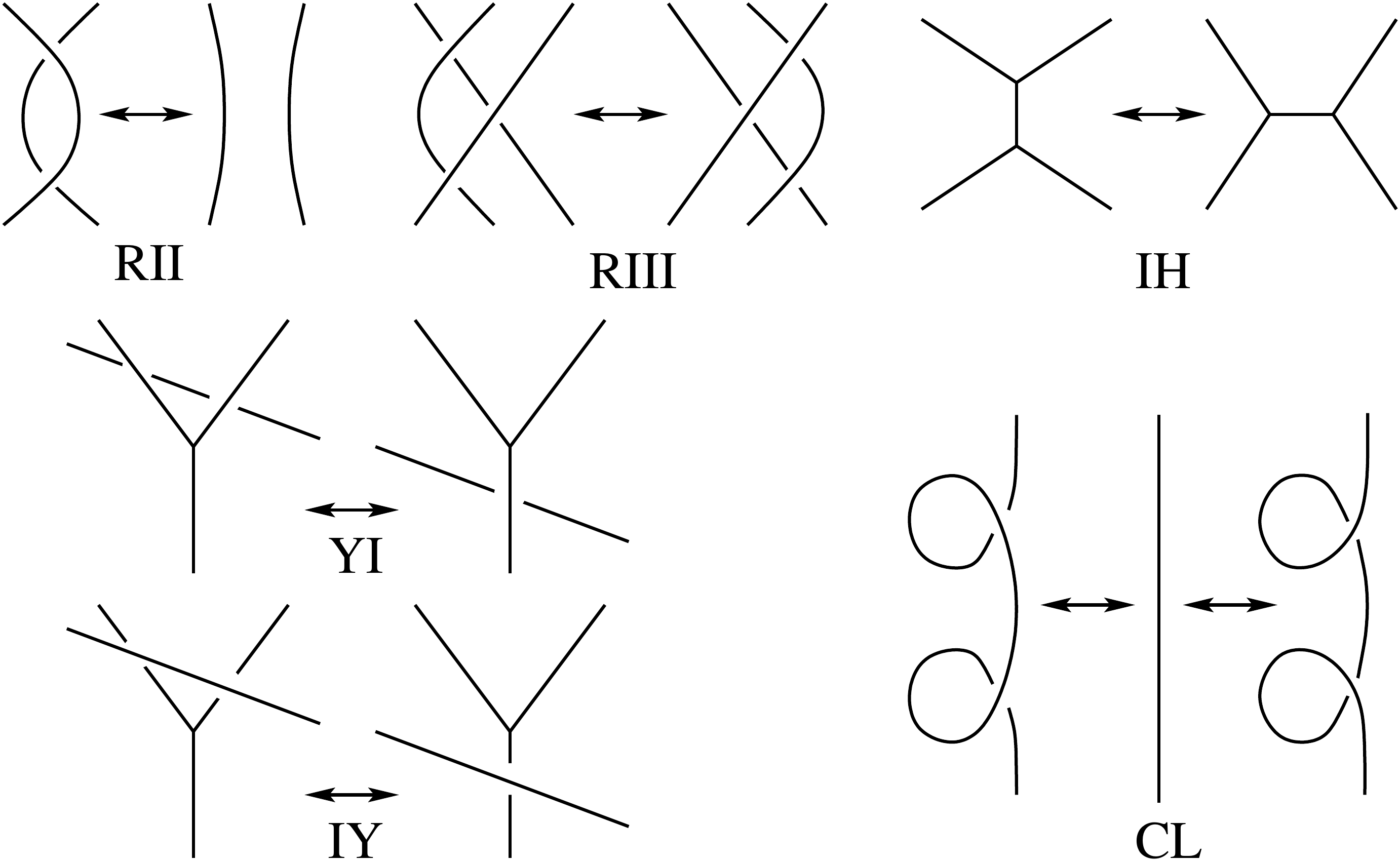}
\end{center}
\caption{Moves}
\label{moves}
\end{figure}

In \cite{Matsu}, it was shown that the isotopy class of a compact orientable surface with boundary
in a surface ribbon form 
 is determined diagrammatically by the moves given in Figure~\ref{moves}. Moves RII, RIII and CL are the framed Reidemeister moves for framed links. Moves IY, YI and IH appear also in the study of handlebody knots in $3$-space, see for instance \cite{Ishii08}. In particular, we mention that the IH move is important in the well posedness of the diagrammatic interpretation in terms of trivalent graphs (spines), since it allows to arbitrarily desingularize higher order vertices, so that we can consider only trivalent vertices as building blocks.
%
Matsuzaki has determined the moves for non-oriented surfaces as well \cite{Matsu}, although we do not consider this case here. 

\section{Extensions of augmented racks and second cohomology groups}\label{sec:ext}
\label{sec:rack_group}

In this section we consider 
two types of extensions of augmented racks and corresponding  constraints on cocycles.
Equivalences of extensions are defined, and bijections to certain second cohomology groups are established.


\subsection{Extensions of augmented racks with a fixed acting group} \label{subsec:Xext}

In this section we consider extansions of augmented racks with fixed acting groups. 

\begin{definition}\label{def:additive}
{\rm Let $(X,G)$ be an augmented rack, and $A$ an abelian group.
 A rack  2-cocycle $\phi \in Z^2_R(X, A)$ is said to be 
 {\it additive (with respect to the second factor)} if it satisfies 
 $$\phi (w, x) + \phi(w*x,y)=\phi(w,z)$$
  for all $w,x,y,z \in X$ such that 
 $\nu(x)\nu(y)=\nu(z)$. 
 }
 \end{definition}

%
%

\begin{lemma}\label{lem:delta_additive}
	Let $(X,G)$ be an augmented rack, and let $\xi :X \rightarrow A$ be a rack $1$-cochain. Then $\delta^1_R \xi$ is an additive $2$-cocycle. 
	Moreover, the sum of two additive cocycles is additive.
\end{lemma}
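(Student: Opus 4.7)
The plan is to verify additivity of $\delta^1_R \xi$ by direct computation using the defining formula for the rack $1$-coboundary and the rule relating the rack operation to the group action in an augmented rack. The fact that $\delta^1_R \xi$ is a $2$-cocycle is automatic from $\delta^2_R \circ \delta^1_R = 0$, which is part of the standard rack cohomology setup already recalled in the Preliminaries, so there is nothing new to do for that part.

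First I would unwind the left hand side of the additivity condition. Taking $w,x,y,z \in X$ with $\nu(x)\nu(y) = \nu(z)$, I compute
\begin{eqnarray*}
(\delta^1_R \xi)(w,x) + (\delta^1_R \xi)(w*x, y)
&=& \bigl[\xi(w) - \xi(w*x)\bigr] + \bigl[\xi(w*x) - \xi((w*x)*y)\bigr] \\
&=& \xi(w) - \xi((w*x)*y).
\end{eqnarray*}
The middle terms telescope, so the whole question reduces to identifying $(w*x)*y$ with $w*z$. This is the one substantive step, and it is where the augmented rack hypothesis enters: since $*$ is defined by the $G$-action via $\nu$, one has $(w*x)*y = (w\cdot\nu(x))\cdot\nu(y) = w\cdot(\nu(x)\nu(y)) = w\cdot\nu(z) = w*z$. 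Therefore the expression equals $\xi(w) - \xi(w*z) = (\delta^1_R\xi)(w,z)$, which is exactly the additivity condition.

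For the second assertion, additivity is a linear condition on $\phi$: if $\phi_1,\phi_2$ both satisfy $\phi_i(w,x) + \phi_i(w*x,y) = \phi_i(w,z)$ whenever $\nu(x)\nu(y)=\nu(z)$, then adding these two equalities yields the same identity for $\phi_1+\phi_2$. Since $Z^2_R(X,A)$ is closed under addition, the sum is again a $2$-cocycle, hence an additive $2$-cocycle.

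There is no real obstacle; the only thing to be careful about is invoking the defining identity $x*y = x\cdot\nu(y)$ at the right moment in order to pass from iterated rack operations to the group product $\nu(x)\nu(y)$, and then back through the relation $\nu(z) = \nu(x)\nu(y)$ to $w*z$. The argument does not need the equivariance relation $\nu(x\cdot g) = g^{-1}\nu(x) g$, only the definition of $*$ from the action, which is consistent with the fact that additivity is a condition phrased purely on the second slot of $\phi$.
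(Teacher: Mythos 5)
Your proof is correct and follows essentially the same route as the paper's: the same telescoping computation for $(\delta^1_R\xi)(w,x) + (\delta^1_R\xi)(w*x,y)$, the same identification $(w*x)*y = w\cdot(\nu(x)\nu(y)) = w*z$ via the definition of $*$ from the $G$-action, and the same observation that the closure under sums is immediate by linearity.
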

\begin{proof}
	For $x,y,z,w\in X$ such that $\nu(x)\nu(y) = \nu(z)$ we have the equalities 
	\begin{eqnarray*}
	(\delta^1_R \xi ) (w,x) + ( \delta^1_R \xi )( w*x,y) &=& \xi(w) - \xi(w*x) + \xi(w*x) - \xi((w*x)*y) \\
	&=& \xi(w) - \xi(w\cdot (\nu(x)\nu(y)))\\
	&=& \xi(w) - \xi(w*z)\\
	&=& ( \delta^1_R \xi ) (w,z). 
	\end{eqnarray*}
	The second part of the statement is immediate.
\end{proof}

As a consequence of Lemma~\ref{lem:delta_additive}, the following definition is well posed.

\begin{definition}\label{def:additive_cohomology}
	{\rm 
	Let $(X,G)$ be an augmented rack, and $A$ be an abelian group.
	 Then, the subgroup of $C^2_R(X,A)$ 
 (resp. $Z^2_R(X,A)$) consisting of additive 2-cochains (resp. 2-cocycles) is called the {\it additive (2-)cochain/cocycle  group},
 and  denoted by $C^2_{R+}(X,A)$ (resp. $Z^2_{R+}(X,A)$).
 The quotient $Z^2_{R+}(X,A)/  {\rm Im}( \delta_R^1 )$ is called 
 the {\it additive (second) cohomology group} of $X$, and denoted by $H^2_{R+}(X,A)$. 
}
\end{definition}

	\begin{definition}\label{def:fibrant}
	{\rm
		Let $(X,G)$ be an augmented rack, and let $\phi \in C^2_R(X,A)$ be a rack $2$-cochain with coefficients in $A$. Then, $\phi$ is said to be {\it $G$-fibrant}, or only {\it fibrant} for short, if it is constant on the fibers of $\nu$ with respect to the second entry. 
		In other words, for all $x , y, z \in X$ such that $\nu(y)=\nu(z)  \in G$, 
		it holds that $\phi(x,y) = \phi(x,z)$. In this case, $\phi$ induces a well defined map 
	$\phi : X\times {\rm Im}(\nu)  \rightarrow A$, 	which we denote by the same symbol,
	$\phi (x, g) = \phi(x, y)$ where $\nu(y)=g \in G$. 
	
	The fibrant $1$-cochains are the maps $\xi: X\rightarrow A$ that are constant on 
	preimages of $\nu$, i.e., for all $x, y \in X$ such that $\nu(x)=\nu(y)$, it holds that $\xi (x)=\xi (y)$. 
	}
	\end{definition}


	\begin{lemma}
		Let $(X,G)$ be an augmented rack, and let $\xi: X\rightarrow A$ be a 
		fibrant
		$1$-cochain with coefficients in an abelian group $A$. Then $\delta^1 \xi$ is fibrant. Moreover, the sum of two fibrant cocycles is fibrant.
	\end{lemma}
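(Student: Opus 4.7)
The plan is to unpack the definitions and exploit the key structural feature of an augmented rack: the rack operation $x*y = x \cdot \nu(y)$ depends on its second argument only through $\nu(y)$. Both assertions become essentially automatic from this observation, in close parallel to Lemma~\ref{lem:delta_additive}.

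For the first statement, I would take $x, y, z \in X$ with $\nu(y) = \nu(z)$ and compute
\[
(\delta^1_R \xi)(x,y) - (\delta^1_R \xi)(x,z) = \bigl(\xi(x) - \xi(x*y)\bigr) - \bigl(\xi(x) - \xi(x*z)\bigr) = \xi(x*z) - \xi(x*y).
\]
Since $x*y = x \cdot \nu(y) = x \cdot \nu(z) = x*z$, the two terms agree, so $(\delta^1_R \xi)(x,y) = (\delta^1_R \xi)(x,z)$, which shows that $\delta^1_R \xi$ is fibrant. I note in passing that the fibrant hypothesis on $\xi$ is not actually needed here; the conclusion follows from the augmented rack axiom alone.

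For the second statement, if $\phi_1$ and $\phi_2$ are fibrant 2-cochains and $\nu(y)=\nu(z)$, then $\phi_i(x,y) = \phi_i(x,z)$ for $i=1,2$, so adding pointwise gives $(\phi_1 + \phi_2)(x,y) = (\phi_1 + \phi_2)(x,z)$. Hence $\phi_1 + \phi_2$ is fibrant; the same closure property restricts to the subgroup of fibrant 2-cocycles. Neither part poses any real obstacle; the only thing to notice is the single structural remark about $x*y$ depending on $y$ only through $\nu(y)$, after which both claims reduce to straightforward substitution.
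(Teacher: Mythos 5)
Your proof is correct and follows essentially the same route as the paper: both reduce the claim to the identity $x*y = x\cdot\nu(y) = x\cdot\nu(z) = x*z$ and treat the closure under sums as immediate. Your side remark that the fibrancy of $\xi$ is not actually used is also accurate (the paper's proof likewise never invokes it); the hypothesis only matters later, in restricting which coboundaries are quotiented out when defining $H^2_{RF}(X,A)$.
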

	\begin{proof}
		For $x,y,z\in X$, with $y, z\in \nu^{-1}(g)$ for some $g\in G$, we have 
		$$
		(\delta^1_R \xi) (x,y) = \xi(x) - \xi(x*y) = \xi(x) - \xi(x*z) = ( \delta^1_R \xi ) (x,z),
		$$
		if and only if $\xi(x*y) = \xi(x*z)$, which holds true, since $x*y = x\cdot g = x*z$. The second part of the statement is immediate.
	\end{proof}

	\begin{definition}
		{\rm 
			Let $(X,G)$ be an augmented rack and let $A$ be an abelian group. Then, we define the {\it  fibrant second rack cochain (resp. cocycle)  group}  of $X$ 
			with coefficients in $A$, denoted by $C^2_{RF} (X,A)$ (resp. $Z^2_{RF} (X,A)$),
			to be the subgroup of $C^2_R(X,A)$ (resp. $Z^2_R(X,A)$)
			that consists of 
			fibrant rack 2-cochains (resp.  cocycles).
			Similarly the first fibrant cochain group $C^1_{RF}(X,A)$ is defined.
	The corresponding cohomology group
	$Z^2_{RF} (X,A) / {\rm Im}(\delta_R^1  ( C^1_{RF} (X,A) )  ) $ is called the {\it  fibrant second rack cohomology  group}, and denoted by 
			$H^2_{RF} (X,A)$. 
		}
	\end{definition}

	\begin{definition}\label{def:fibadditive}
		{\rm 
		Let $(X,G)$ be an augmented rack, and let $A$ be an abelian group. Then, the {\it fibrant-additive} $2$-cochains (resp. cocycles) of $(X,G)$ with coefficients in $A$ are defined to be $2$-cochains (resp. cocycles) of $X$ that are both fibrant, and additive. They constitute a subgroup of $C^2_R(X,A)$ (resp. $Z^2_R(X,A)$) which is denoted by the symbol 
	$C^2_{RF+}(X,A)= C^2_{RF}(X,A) \cap C^2_{R+}(X,A) $ (resp.	$Z^2_{RF+}(X,A)= Z^2_{RF}(X,A) \cap Z^2_{R+}(X,A) $).
		
		The quotient of $Z^2_{RF+}(X,A)$ by the  fibrant rack coboundaries is called the {\it fibrant-additive cohomology} of $(X,G)$ with coefficients in $A$, and it is denoted by the symbol 
		$$H^2_{RF+}(X,A)=[ Z^2_{RF}(X,A) \cap Z^2_{R+}(X,A) ]/ {\rm Im} (\delta_R^1 ( C^1_{RF} (X,A) ) )  .$$
	}
	\end{definition}

We show how fibrant-additivity relates to abelian extensions.

\begin{proposition}\label{prop:Xext}
Let $X$ be a $G$-rack, and $A$ an abelian group.
Let $\tilde{X}=X \times A$ be an abelian extension by $\phi \in C^2_{RF+}(X,A)$, where $\phi$ is a fibrant and additive rack $2$-cochain. 
%
%
Define $\tilde{\nu}: \tilde{X} \rightarrow G$ by $\tilde{\nu} (x,a) = \nu(x)$,
and the action $\tilde{X} \times G \rightarrow \tilde{X}$ by 
$(x,a) \cdot g:= (x \cdot g, a + \phi(x, g) )$.
Then $(\tilde{X}, G)$ is a $G$-rack. 
\end{proposition}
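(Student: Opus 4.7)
The plan is to verify the three defining properties of a $G$-rack for $(\tilde X, G)$ with the proposed $\tilde\nu$ and action: first, that the formula $(x,a) \cdot g = (x \cdot g,\, a + \phi(x,g))$ defines a well-defined right action of $G$ on $\tilde X$; second, that $\tilde\nu$ satisfies the augmented rack equivariance $\tilde\nu(\tilde{x}\cdot g) = g^{-1}\tilde\nu(\tilde{x})g$; and third, a sanity check that the rack operation induced on $\tilde X$ by $\tilde x * \tilde y := \tilde x \cdot \tilde\nu(\tilde y)$ agrees with the rack extension product on $X \times A$ determined by $\phi$, so that the ``new'' $G$-rack structure is compatible with the given abelian extension.

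For the first property, the value $\phi(x,g)$ is unambiguous by fibrancy: any choice of $y$ with $\nu(y)=g$ yields the same value $\phi(x,y)$. The axiom $((x,a)\cdot g)\cdot h = (x,a)\cdot(gh)$ reduces on the $X$-factor to the existing action of $G$ on $X$, and on the $A$-factor to the identity
$$\phi(x,g) + \phi(x\cdot g,\, h) = \phi(x,\, gh).$$
This is precisely the fibrant reformulation of additivity: pick $y_1,y_2,y_3\in X$ with $\nu(y_i)$ equal to $g,h,gh$ respectively, so that $\nu(y_1)\nu(y_2)=\nu(y_3)$, and invoke Definition~\ref{def:additive} with $w=x$. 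Specializing $g=h=e$ forces $\phi(x,e)=0$, giving the identity axiom $(x,a)\cdot e = (x,a)$.

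The second property is a direct computation using that $(X,G)$ is already an augmented rack:
$$\tilde\nu\bigl((x,a)\cdot g\bigr) = \nu(x\cdot g) = g^{-1}\nu(x)g = g^{-1}\tilde\nu(x,a)g.$$
For the third property, fibrancy of $\phi$ gives
$$(x,a)\cdot \tilde\nu(y,b) \,=\, (x,a)\cdot\nu(y) \,=\, \bigl(x\cdot\nu(y),\; a + \phi(x,\nu(y))\bigr) \,=\, \bigl(x*y,\; a + \phi(x,y)\bigr),$$
which is the rack extension product, confirming consistency.

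The main obstacle is a domain issue: $\phi$ is a priori defined on $X\times X$ and fibrancy canonically lifts it only to $X\times{\rm Im}(\nu)$, whereas the action formula involves arbitrary $g\in G$. One resolves this either by restricting to the subgroup $\langle{\rm Im}(\nu)\rangle\leq G$, which retains the full $X$-action, or by extending $\phi$ to $X\times G$ by iterating additivity along any decomposition of $g$ as a product of elements of ${\rm Im}(\nu)$ and their inverses, with consistency of the extension guaranteed precisely by the additivity relation and by $\phi(x,e)=0$. Once this convention is fixed, the three verifications above complete the proof.
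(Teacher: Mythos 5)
Your two main verifications are exactly the paper's: you reformulate additivity under fibrancy as $\phi(w,g)+\phi(w\cdot g,h)=\phi(w,gh)$ and use it to get the right-action axiom, then check equivariance of $\tilde\nu$ by the same one-line computation. Your third ``sanity check'' is not in the paper's proof of this proposition but is precisely the observation the paper makes immediately afterwards in Proposition~\ref{prop:fib+cocy}, so it is harmless and consistent.

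The one place where you go beyond the paper is the domain issue, and you are right to raise it: Definition~\ref{def:fibrant} only produces an induced map $\phi:X\times\mathrm{Im}(\nu)\to A$, while the proposition writes $\phi(x,g)$ for arbitrary $g\in G$; the paper's proof silently ignores this. However, your proposed repair by ``iterating additivity along a decomposition of $g$'' overclaims. Additivity (Definition~\ref{def:additive}) is a condition quantified over $x,y,z\in X$ with $\nu(x)\nu(y)=\nu(z)$; it is vacuous for pairs $g,h\in\mathrm{Im}(\nu)$ with $gh\notin\mathrm{Im}(\nu)$, so it does not by itself guarantee that two different words in $\mathrm{Im}(\nu)\cup\mathrm{Im}(\nu)^{-1}$ representing the same group element yield the same telescoped value; that consistency is an additional claim needing proof, not something ``guaranteed precisely by the additivity relation.'' Likewise $\phi(x,e)=0$ only follows from your specialization if $e\in\mathrm{Im}(\nu)$, which need not hold. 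The honest fixes are either your first option (restrict the acting group to $\langle\mathrm{Im}(\nu)\rangle$, or note that only $g\in\mathrm{Im}(\nu)$ is ever used for the induced rack structure), or to add the well-definedness of an extension of $\phi$ to $X\times G$ as an explicit hypothesis. With that caveat stated rather than asserted, your argument is complete and matches the paper's.
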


\begin{proof}
	First we note that the additivity of $\phi$ under the assumption of being fibrant is reformulated,
by setting $\nu(x)=g$ and $\nu(y)=h$,  as
$\phi (w, g) + \phi (w \cdot g , h)=\phi(w, gh)$ from
$\phi (w, x) + \phi(w*x,y)=\phi(w,z)$, since $w*x=w\cdot \nu(x)=w \cdot g$ and $\nu(x)\nu(y)=gh=\nu(z)$. 

The action defined is indeed a right $G$-action:
for all $w, x,y,z \in X$ such that $\nu(s) \nu(y) = \nu (z)$, we have 
$ [  (w,d) \cdot g ] \cdot h =  (w \cdot g, d + \phi(w, g) ) \cdot h
= ( [ w \cdot g] \cdot h , d + \phi(w, g) + \phi  (w \cdot g, h)  ) $,
which is equal to 
$  (w,d) \cdot ( g  h )  = (w \cdot (gh) , d + \phi (x, gh) )  $ by the above reformulated additivity.

Then one checks the $G$-rack condition 
$\tilde{\nu}( (x,a) \cdot g) = \tilde{\nu}( x \cdot g , a + \phi( x, g)) =
\nu(x \cdot g ) =g^{-1} \nu(x) g = 
g^{-1} \tilde{\nu}( x,a) g$ as desired. 
\end{proof}

\begin{definition}
{\rm
Let $X$ be a $G$-rack, and $A$ an abelian group, and $\phi \in Z^2_{RF+}(X,A)$.
The $G$-rack $\tilde X=X \times A$ defined in Proposition~\ref{prop:Xext} by $\phi$ is called
a {\it $G$-rack extension} of $(X,G)$ by $\phi$.
}
\end{definition}

In Proposition~\ref{prop:Xext}, extensions are defined by 2-cochains. 
We show, in fact, that  $2$-cochains automatically are 2-cocycles.
This gives a method of constructing 2-cocycles.

\begin{proposition}\label{prop:fib+cocy}
Any fibrant-additive 2-cochain is a 2-cocycle: 
$C^2_{RF+}(X,A) 
=Z^2_{\rm RF+} (X,A)$.
\end{proposition}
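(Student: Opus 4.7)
The approach is to invoke Proposition~\ref{prop:Xext} and extract the cocycle condition as a consequence of the fact that its construction already yields a $G$-rack. The point that makes this argument non-circular is that Proposition~\ref{prop:Xext} uses only fibrancy and additivity of $\phi$—never the cocycle condition—to establish the $G$-action and the conjugation identity for $\tilde{\nu}$, as can be seen in its proof.

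Given $\phi \in C^2_{RF+}(X,A)$, I would first apply Proposition~\ref{prop:Xext} to equip $\tilde{X} = X \times A$ with the structure of a $G$-rack via the prescribed $G$-action and the map $\tilde{\nu}(x,a) = \nu(x)$. The key general observation I would then record is that every $G$-rack is automatically a rack under $u * v := u \cdot \tilde{\nu}(v)$: using $\tilde{\nu}(u \cdot g) = g^{-1} \tilde{\nu}(u) g$ together with the associativity of the $G$-action, both $(u*v)*w$ and $(u*w)*(v*w)$ simplify to $u \cdot \tilde{\nu}(v)\tilde{\nu}(w)$, and $S_v$ is invertible because $\tilde{\nu}(v) \in G$ is. Hence $\tilde{X}$ is a rack.

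Next, I would write out the induced rack operation on $\tilde{X}$ in coordinates, using fibrancy to identify $\phi(x,\nu(y))$ with $\phi(x,y)$:
\[
(x, a) * (y, b) = (x, a) \cdot \nu(y) = (x \cdot \nu(y),\, a + \phi(x, \nu(y))) = (x * y,\, a + \phi(x, y)).
\]
This is precisely the shape of the standard abelian rack extension of $X$ by the $2$-cochain $\phi$. Expanding the self-distributivity identity in $\tilde{X}$—which holds by the previous step—and comparing $A$-components yields exactly
\[
\phi(x_1, x_2) + \phi(x_1 * x_2, x_3) = \phi(x_1, x_3) + \phi(x_1 * x_3, x_2 * x_3),
\]
that is, the $2$-cocycle condition $(\delta^2_R \phi)(x_1, x_2, x_3) = 0$. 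Combined with the standing hypotheses of fibrancy and additivity, this places $\phi$ in $Z^2_{RF+}(X,A)$, and the reverse inclusion is immediate from the definitions.

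The step I expect to be the main obstacle is the non-circularity check alluded to in the first paragraph: one must confirm, by rereading the proof of Proposition~\ref{prop:Xext}, that establishing $\tilde{X}$ as a $G$-rack truly does not secretly rely on the rack $2$-cocycle identity for $\phi$, but only on the reformulated additivity $\phi(w,g) + \phi(w\cdot g, h) = \phi(w, gh)$ and the conjugation identity for $\nu$. Once this is verified, the remainder of the argument is essentially bookkeeping.
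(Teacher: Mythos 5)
Your proposal is correct and follows essentially the same route as the paper: both arguments feed the fibrant-additive cochain into Proposition~\ref{prop:Xext} (whose proof indeed uses only the reformulated additivity and never the cocycle identity), observe that the resulting $G$-rack operation in coordinates is the standard abelian extension form $(x,a)*(y,b)=(x*y,\,a+\phi(x,y))$, and read off the $2$-cocycle condition from self-distributivity. Your explicit non-circularity check and the verification that every $G$-rack is a rack are points the paper leaves implicit, but the substance is identical.
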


\begin{proof}
Let $\phi \in C^2_{RF+}(X,A) $ as in Proposition~\ref{prop:Xext}.
Note that the rack operation is written as 
$$(x,a)*(y,b)=(x,a) \cdot \tilde{\nu} (y, b)=(x,a) \cdot ({\nu} (y) , b)=(x \cdot ({\nu} (y) , a +\phi(x, \nu(y)) )
=(x*y, a +\phi(x, \nu(y)) ) , $$
which is the original rack extension by a 2-cocycle.
Hence the original computation of the extension applies to obtain the rack 2-cocycle condition
from the self-distributivity:
\begin{eqnarray*}
 ( (x,a)*(y,b) )*(z,c) &=& ( (x*y)*z, a+\phi(x,y) + \phi(x*y, z) ) \\
 ((x,a)*(z,c) )* ((y,b)* (z,c) )&=& ( (x*z)*(y*z), a + \phi (x,z) + \phi( x*z, y*z) ) ,
 \end{eqnarray*}
 as expected.
\end{proof}


We provide constructions of examples using group extensions.
Let $G$ be a group and $Q$ a union of its conjugacy classes.
Then $Q$ is an augmented quandle by $\nu: Q \rightarrow G$ the inclusion, and is a conjugation subquandle of $G$.

Let $1 \rightarrow A \rightarrow  \tilde G \stackrel{p}{\rightarrow} G \rightarrow 1$ be a 
central extension by $\eta \in Z^2_G(G,A)$. 
Let $\tilde Q \subset \tilde G$ be a union of conjugacy classes, and $Q:=p(\tilde Q)$.
Then $\tilde Q$ is a conjugation subquandles of  $\tilde G$.
We denote the conjugation quandle operation by $*$ for $G$, $Q$ and $\tilde *$ for $\tilde G$, $\tilde Q$, respectively.

\begin{lemma}
Let  $G$, $Q$, $\tilde G$, and $\tilde Q$ be as above.
Let $s: G \rightarrow \tilde G$ be a set-theoretic section.
Define an action of $G$ on $\tilde Q$ by $\tilde x \cdot g= s(g)^{-1} x s(g)$ for $\tilde x \in \tilde X, g \in G$.
Then $\tilde Q$ is an augmented rack with $\tilde\nu  := \nu \circ p$.
\end{lemma}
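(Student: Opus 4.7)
The plan is to verify the two defining conditions for an augmented rack: first, that the prescribed formula $\tilde x \cdot g = s(g)^{-1} \tilde x s(g)$ actually defines a right action of $G$ on $\tilde Q$, and second, that the compatibility identity $\tilde\nu(\tilde x \cdot g) = g^{-1}\tilde\nu(\tilde x) g$ holds. Note that $\tilde \nu$ is simply $p$ restricted to $\tilde Q$, since $\nu \colon Q \hookrightarrow G$ is the inclusion.

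First I would check that the formula lands inside $\tilde Q$: since $\tilde Q$ is a union of conjugacy classes of $\tilde G$ and $s(g) \in \tilde G$, the element $s(g)^{-1}\tilde x s(g)$ lies in the same $\tilde G$-conjugacy class as $\tilde x$ and hence in $\tilde Q$. This makes the assignment $\tilde x \mapsto \tilde x \cdot g$ a well-defined map $\tilde Q \to \tilde Q$.

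The main substantive step, where centrality of $A$ is essential, is verifying the action axioms. For any $g,h \in G$ we have $p(s(g)s(h)) = gh = p(s(gh))$, so $s(g)s(h) = s(gh)\,a(g,h)$ for some $a(g,h) \in A$. Because the extension is central, $a(g,h)$ commutes with every element of $\tilde G$, so conjugation by $s(g)s(h)$ and by $s(gh)$ coincide. This yields
\[
(\tilde x \cdot g)\cdot h = s(h)^{-1}s(g)^{-1}\tilde x\, s(g)s(h) = s(gh)^{-1}\tilde x\, s(gh) = \tilde x \cdot (gh).
\]
Likewise $s(e)$ differs from the identity of $\tilde G$ by a central element, so $\tilde x \cdot e = \tilde x$. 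Thus the formula defines a right action of $G$ on $\tilde Q$.

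Finally I would verify the augmented rack identity by direct computation using that $p$ is a group homomorphism:
\[
\tilde\nu(\tilde x \cdot g) = p\bigl(s(g)^{-1}\tilde x\, s(g)\bigr) = p(s(g))^{-1} p(\tilde x) p(s(g)) = g^{-1} \tilde\nu(\tilde x)\, g.
\]
The only real obstacle is conceptual rather than computational: one must notice that the arbitrariness of the set-theoretic section $s$ is harmless precisely because $A$ is central, which is what absorbs the cocycle discrepancy $a(g,h)$ into the kernel of conjugation. Once this is observed, each step is a short direct calculation.
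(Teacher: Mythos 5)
Your proof is correct and follows essentially the same route as the paper: both arguments hinge on writing $s(g)s(h) = s(gh)\,a$ with $a \in A$ central so that conjugation by $s(g)s(h)$ and by $s(gh)$ agree, and then checking the compatibility identity via $p(s(g)) = g$. You add a couple of small details the paper leaves implicit (that the formula lands in $\tilde Q$ because it is a union of conjugacy classes, and the identity axiom for the action), but the substance is identical.
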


\begin{proof}
For $g, h \in G$, there exists $a \in A $ such that $s(g) s(h) = a s(gh)$, where 
$A$ is regarded as a subgroup in the center $Z(\tilde G)$.
For $\tilde x  \in \tilde X$ and $g,h \in G$, one computes 
$$ (\tilde x \cdot g) \cdot h = s(h)^{-1} s(g)^{-1} \tilde x s(g) s(h) = (a s(gh))^{-1} \tilde x (a s(gh) )
=s(gh))^{-1} \tilde x s(gh) =\tilde x \cdot (gh),$$
so that this defined an action.
One computes 
$\tilde \nu( \tilde x \cdot g)= \tilde \nu( s(g)^{-1} \tilde x s(g) ) = \nu ( g^{-1} p (\tilde x) g) $
for $\tilde x \in \tilde Q$ and $g \in G$, as desired.
\end{proof}

Note that in the above situation, $\tilde \nu (\tilde Q)=Q$. 
In \cite{ClaSa}, it is proved that 
if  $|\tilde Q |/|Q|=2$, then $\tilde Q$ is an abelian extension of $Q$.
Hence the preceding lemma gives rise to examples of 
Proposition~\ref{prop:Xext}. However, we do not know when the corresponding rack cocycles are additive.

Next we establish a bijection between equivalence classes of $G$-rack extensions and 
the second fibrant-additive cohomology group  $H^2_{RF+} (X,A) $.

\begin{definition}\label{def:Eq_Xext}
{\rm
Let $X$ be a $G$-rack and $A$ an abelian group.
Let $\tilde{X}_i=X \times A$, $i=1,2$, be $G$-rack extensions of a $G$-rack $X$ by $\phi_i \in Z^2_{RF+}(X,A)$
as defined in Proposition~\ref{prop:Xext}.
Let $p_i : \tilde{X}_i \rightarrow X$ be the projection to the first factor. 
We say that 
$( \tilde{X}_i, p_i) $ 
are equivalent if 
there is a 
bijection 
$F : \tilde{X}_1  \rightarrow \tilde{X}_2$ such that the following diagrams commute. \\
\begin{center}
		\begin{tikzcd}
		\tilde X_1 \arrow[rr,"F"]\arrow[rd,"  p_1" swap] && \tilde X_2\arrow[ld," p_2"]\\
		   &X &\\[-15pt]    
		   &(1)&
		\end{tikzcd}
%
%
\quad			\begin{tikzcd}
				\tilde X_1\times G \arrow[rr]\arrow[d," F \times {\rm id}_G" swap] &&\tilde X_1\arrow[d,"F"] \\
				\tilde X_2\times G \arrow[rr] && \tilde X_2 
				\\[-15pt]  &(2)& 
				\end{tikzcd}
		\end{center}
If such a 
 map 
$F$ exists, we say that $F$ is an {\it isomorphism of extensions}, or isomorphism for short.
}
\end{definition}

\begin{theorem}\label{thm:Xext}
Let $(X,G)$ be an augmented rack, and let $A$ be an abelian group. 
Then the equivalence classes of G-rack extensions of $(X,G)$ by $A$ are in bijective correspondence with the 
fibrant-additive rack cohomology group $H^2_{RF+}(X, A)$. 
\end{theorem}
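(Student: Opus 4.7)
The plan is to construct a bijection
\[
\Psi : H^2_{RF+}(X, A) \longrightarrow \bigl\{ \text{equivalence classes of $G$-rack extensions of $(X,G)$ by $A$} \bigr\}
\]
sending $[\phi] \mapsto [\tilde X_\phi]$, where $\tilde X_\phi$ is the $G$-rack of Proposition~\ref{prop:Xext}, and then to verify well-definedness on cohomology classes, surjectivity, and injectivity. Throughout I will use the reformulation
\[
\phi(w, gh) = \phi(w, g) + \phi(w\cdot g, h)
\]
of additivity-plus-fibrancy from the proof of Proposition~\ref{prop:Xext}, and its immediate corollary $\phi(w, e) = 0$.

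For well-definedness, I take $\phi_1, \phi_2 \in Z^2_{RF+}(X,A)$ with $\phi_1 - \phi_2 = \delta^1_R \xi$ for some $\xi \in C^1_{RF}(X,A)$ and define $F : \tilde X_{\phi_1} \to \tilde X_{\phi_2}$ by $F(x, a) := (x, a + \xi(x))$. Diagram~(1) of Definition~\ref{def:Eq_Xext} commutes by construction. For diagram~(2), a direct computation reduces the required identity to $\xi(x \cdot g) - \xi(x) = \phi_2(x, g) - \phi_1(x, g)$; fibrancy of $\xi$ makes the left-hand side independent of the representative $y \in \nu^{-1}(g)$, and the coboundary hypothesis is exactly this identity. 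Surjectivity is then essentially immediate: every $G$-rack extension is, by the very constructions of Propositions~\ref{prop:Xext} and~\ref{prop:fib+cocy}, of the form $\tilde X_\phi$ for some $\phi \in Z^2_{RF+}(X,A)$, so every equivalence class lies in the image of $\Psi$.

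For injectivity, suppose $F : \tilde X_{\phi_1} \to \tilde X_{\phi_2}$ is an equivalence. Commutativity of diagram~(1) forces $F(x, a) = (x, h(x, a))$ for some $h: X \times A \to A$, and commutativity of diagram~(2) yields
\[
h(x\cdot g,\ a + \phi_1(x, g)) = h(x, a) + \phi_2(x, g).
\]
I would set $\xi(x) := h(x, 0)$ and first establish the affine form $h(x, a) = a + \xi(x)$, exploiting the functional equation on elements $g$ with $x \cdot g = x$ to promote the $G$-equivariance of $F$ to $A$-equivariance in the fibers. Granting this, the functional equation collapses to
\[
\xi(x\cdot g) - \xi(x) = \phi_2(x, g) - \phi_1(x, g),
\]
from which (a) fibrancy of $\xi$ falls out, because the right-hand side depends on $g$ only through $\nu$, forcing $\xi$ to be constant on $\nu$-fibers; and (b) the cocycle identity $\phi_1 - \phi_2 = \delta^1_R \xi$ is immediate after passing from the fibrant reformulation back to the rack formulation via $x*y = x \cdot \nu(y)$.

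The main obstacle I anticipate is the injectivity step, specifically extracting the affine form of $h$ in the second variable from Definition~\ref{def:Eq_Xext}, which only requires compatibility with the projection and the $G$-action rather than explicit $A$-equivariance. The argument must leverage the fact that the $G$-action on $\tilde X_\phi$ produces genuine fiber translations (coming from elements $g$ whose action on $X$ fixes a point while $\phi_i(x,g) \neq 0$), combined with the normalization $\phi_i(x,e) = 0$. Once this affineness is secured, both the fibrancy of $\xi$ and the relation to $\delta^1_R \xi$ follow essentially for free from the same functional equation.
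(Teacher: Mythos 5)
Your overall strategy is the same as the paper's: send $[\phi]$ to the class of the extension $\tilde X_\phi$ of Proposition~\ref{prop:Xext}, and, given an equivalence $F:\tilde X_1\to\tilde X_2$, extract a $1$-cochain $\xi$ with $F(x,a)=(x,a+\xi(x))$ and read off $\phi_1-\phi_2=\delta^1_R\xi$; well-definedness and surjectivity are handled the same way in both. The genuine gap is exactly the one you flag in the injectivity step, and your proposed repair does not close it. Diagram (1) of Definition~\ref{def:Eq_Xext} only yields $F(x,a)=(x,h(x,a))$ with each $h(x,\cdot)$ a bijection of $A$, and the functional equation $h(x\cdot g,\,a+\phi_1(x,g))=h(x,a)+\phi_2(x,g)$ constrains the dependence of $h(x,\cdot)$ on $a$ only through increments $\phi_1(x,g)$ for those $g$ returning to the same point of $X$. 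If the $G$-action on $X$ is free, or if $\phi_1(x,g)=0$ whenever $g$ stabilizes $x$, there are no such ``fiber translations'' at all, and non-affine bijections of the fibers satisfy every diagram in Definition~\ref{def:Eq_Xext} (already for $X$ a single point with trivial $G$, any permutation of $A$ qualifies). The paper does not close this gap either --- it simply asserts the affine form of $F$ --- so the correct resolution is to strengthen the definition of equivalence (require $F$ to commute with the translation action of $A$ on the second factor, as is standard for abelian extensions of quandles), after which your computation goes through; but as written, your plan to derive affineness from the stated axioms would fail.

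A second, more minor error: your claim (a), that fibrancy of $\xi$ ``falls out'' of $\xi(x)-\xi(x*y)=\phi_2(x,y)-\phi_1(x,y)$, is not correct. Fibrancy of $\xi$ is the assertion $\xi(x)=\xi(x')$ whenever $\nu(x)=\nu(x')$, whereas the functional equation only determines differences of $\xi$ along orbits of the inner action; the fact that the right-hand side depends on $y$ only through $\nu(y)$ is automatically consistent (since $x*y=x\cdot\nu(y)$ also depends on $y$ only through $\nu(y)$) and imposes no relation between values of $\xi$ at distinct points of the same $\nu$-fiber. Since $H^2_{RF+}(X,A)$ is defined as the quotient of $Z^2_{RF+}(X,A)$ by coboundaries of \emph{fibrant} $1$-cochains, concluding $[\phi_1]=[\phi_2]$ requires either showing that $\xi$ may be replaced by a fibrant $1$-cochain with the same coboundary or adjusting which coboundary subgroup is quotiented out; neither your argument nor the paper's supplies this step.
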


\begin{proof}
Let $F : \tilde{X}_1  \rightarrow \tilde{X}_2$ be 
 a bijection 
 in the definition of equivalence between
extensions $p_i: \tilde{X}_i \rightarrow X$, $i=1,2$, by $\phi_i \in Z^2_{RF+}(X,A)$
as in 
Definition~\ref{def:Eq_Xext}. 
Since $p_1 = p_2 \circ F$ from the commutative diagram (1) in Definition~\ref{def:Eq_Xext}, for any $(x, a) \in \tilde{X}_1$ there exists $\xi(x) \in A$ such that 
$F(x,a)=(x, a+\xi(x)) \in \tilde{X}_2$. This defines a function $\xi: X \rightarrow A$, $\xi \in C^1_{R}(X,A)$.  
In addition, we observe that 
since $$(  \tilde \nu_2 \circ F) (x,a) = \tilde \nu_2(  x, a+ \xi(x) ) = \nu (x) = \tilde \nu_1 (x,a), $$
the diagram
	\begin{center}
		\begin{tikzcd}
		\tilde X_1 \arrow[rr,"F"]\arrow[rd,"  \tilde \nu_1" swap] && \tilde X_2\arrow[ld," \tilde \nu_2"]\\
		   &G &   
			   \\
			\end{tikzcd}
		\end{center}
commutes 
as well, establishing that $F$ lies over $G$. 
For $(x,a), (y,b) \in \tilde{X}_1$, we have 
\begin{eqnarray*}
F( (x,a)*(y,b) ) &=& ( x*y, a+\phi_1(x,y) + \xi(x*y) ) , \\
F( x,a ) * F ( y,b ) &=& (x, a+\xi(x)) * (y, b+\xi(y)) \ =\  (x*y, a+\xi(x) + \phi_2 (x,y) ),
\end{eqnarray*}
from the commutative diagram (2).
Hence $\phi_1=\phi_2 + \delta^1_R \xi$, and we obtain $[\phi_1]=[\phi_2] \in H^2_{RF+}(X,A)$
as desired. 
Observe, in particular, that the previous equalities show that $F$ is also a rack homomorphism with respect to the rack extension structures. 
Conversely, if $\phi_1=\phi_2 + \delta^1_R \xi$, then $F((x,a)) := (x, a + \xi(x))$ defines a desired 
isomorphism.
\end{proof}

 \subsection{Simultaneous extensions of augmented racks}\label{subsec:Gext}

In this section we generalize the definition of G-rack extensions to extensions of $X$ and $G$ simultaneously, for augmented $G$-rack $X$ by an abelian group $A$, define appropriate cohomology, and establish a bijection
between equivalence classes of such extensions and the cohomology defined.

\begin{definition}\label{def:etaderived}
{\rm
Let $X$ be a $G$-rack,  $A$ an abelian group,
 $\phi \in Z^2_{R}(X,A)$, 
 and $\eta \in Z^2_G(G,A)$. 
We say that  $\phi $ is {\it derived} from $\eta$, or {\it $\eta$-derived}, 
if they satisfy 
$$\phi(x,y)=- \eta(e,e) 
 - \eta(\nu(y), \nu(y)^{-1}) + \eta(\nu(y)^{-1}, \nu (x)) + \eta(\nu(y)^{-1} \nu (x) , \nu(y)) $$
for all $x, y \in X$, 
where $e\in G$ is the identity.
Note that the first two negative terms vanish for normalized group 2-cocycles $\eta$. 
}
\end{definition}

\begin{remark}\label{rem:totallyfib}
{\rm
Note that if $\phi $ is $\eta$-derived in the preceding definition, 
then 
$\phi$ is {\it totally fibrant}, in the sense that both factors depend only on the image 
of $\nu$.
In particular, $\phi$ is fibrant, i.e. $\phi \in  Z^2_{RF}(X,A)$. 

If $\phi$ is totally fibrant, then it can be written as a pull-back $\phi=\nu^\sharp \bar{\phi}$, 
which means 
$\phi(x,y) =\bar{\phi} (\nu(x), \nu(y) )$ for some $\bar{\phi}: G \times G \rightarrow A$.


}
\end{remark}

\begin{example}\label{ex:central_extension}
{\rm

Let $(X,G)$ be an augmented rack and $A$ an abelian group.
Let $\tilde G = G \times A$ be a central  extension of $G$ by a 2-cocycle $\eta \in Z^2_G(X,A)$.
Let a function $\phi': G \times G \rightarrow A$ be defined by
$$\phi'(g,h)=- \eta(e,e) 
 - \eta(h, h^{-1}) + \eta(h^{-1},g) + \eta(h^{-1}g,h), $$
via the right-hand side of the equality in Definition~\ref{def:etaderived}. 
Let $\phi=\nu^\sharp (\phi')$ be the pull-back as described in Remark~\ref{rem:totallyfib}.
Thus we obtain $\eta$-derived cocycles by pull-backs. 

}
\end{example}

%

 \begin{proposition}\label{prop:Gext}
 Let $X$ be a $G$-rack, and $A$ an abelian group.
Let $\tilde{X}=X \times A$ be a rack  extension by $\phi \in Z^2_R(X,A)$,
and $\tilde{G}=X \times A$ be a central extension of $G$ by $\eta \in Z^2_G(G,A)$. 
Assume, further, that $\phi $  is additive ($\phi \in Z^2_{R+}(X,A)$), 
and  $\eta$-derived (so that $\phi$ is in fact fibrant-additive, $\phi \in Z^2_{RF+}(X,A)$).

Define $\tilde{\nu}: \tilde{X} \rightarrow \tilde{G}$ by $\tilde{\nu} (x,a) = (\nu(x), a)$,
and the action $\tilde{X} \times \tilde{G} \rightarrow \tilde{X}$ by 
$(x,a) \cdot (g, s) := (x \cdot g, a + \phi(x, g) )$.
Then $\tilde{X}$ 
 is a $\tilde{G}$-rack. 
\end{proposition}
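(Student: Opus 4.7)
The plan is to verify directly the two defining conditions for $(\tilde X, \tilde G)$ to be an augmented rack: that the formula $(x,a)\cdot(g,s) = (x\cdot g, a+\phi(x,g))$ is a right action of $\tilde G$ on $\tilde X$, and that $\tilde\nu$ satisfies the equivariance $\tilde\nu(\tilde x\cdot \tilde g) = \tilde g^{-1}\tilde\nu(\tilde x)\tilde g$. Throughout, $\phi(x,g)$ for $g\in G$ is interpreted via the $\eta$-derived formula of Definition~\ref{def:etaderived}, which, by Remark~\ref{rem:totallyfib}, agrees with the fibrant value of $\phi$ when $g\in \mathrm{Im}(\nu)$ and extends naturally to all of $G$.

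To see that the identity $(e,-\eta(e,e))\in\tilde G$ acts trivially, I substitute $g=e$ in the $\eta$-derived formula and use the identities $\eta(g,e)=\eta(e,g)=\eta(e,e)$ recorded in the preliminaries; a short calculation yields $\phi(x,e)=0$, so $(x,a)\cdot(e,-\eta(e,e)) = (x,a)$. For associativity, expanding both $((x,a)\cdot(g,s))\cdot(h,t)$ and $(x,a)\cdot((g,s)(h,t))$ reduces the claim to the identity
$$\phi(x,g)+\phi(x\cdot g, h) = \phi(x,gh),$$
which is precisely the fibrant reformulation of additivity already noted at the start of the proof of Proposition~\ref{prop:Xext}, and so holds by hypothesis. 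Note that $s,t\in A$ drop out of the action itself but enter the multiplication in $\tilde G$ through $\eta$; what makes associativity work is exactly that those $\eta$-contributions are absorbed into the additivity relation above.

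The equivariance of $\tilde\nu$ is where the $\eta$-derived hypothesis is essential. The left-hand side is immediate:
$$\tilde\nu((x,a)\cdot(g,s)) = (\nu(x\cdot g),\, a+\phi(x,g)) = (g^{-1}\nu(x)g,\, a+\phi(x,g)).$$
For the right-hand side, I compute the conjugate $(g,s)^{-1}(\nu(x),a)(g,s)$ directly in the central extension $\tilde G$. Using the inverse formula in $\tilde G$ together with the twisted multiplication rule, the two occurrences of $\pm s$ cancel and the conjugate simplifies to
$$\bigl(g^{-1}\nu(x)g,\; a - \eta(e,e) - \eta(g,g^{-1}) + \eta(g^{-1},\nu(x)) + \eta(g^{-1}\nu(x),g)\bigr).$$
Matching second coordinates then reads off as exactly the $\eta$-derived formula with $\nu(y)=g$, so the two sides agree.

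The main obstacle is the bookkeeping in this last step: one has to track carefully the inverse in the central extension $\tilde G$ (in particular retaining the $-\eta(g,g^{-1})$ term) so that the combination of $\eta$-values produced by conjugation matches verbatim the four-term expression defining an $\eta$-derived cochain. There is no geometric subtlety here; Definition~\ref{def:etaderived} is tailored precisely so that conjugation of $(\nu(x),a)$ by $(g,s)$ produces the required twist, and once one recognizes this, the verification is essentially mechanical.
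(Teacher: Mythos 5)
Your proposal is correct and follows essentially the same route as the paper: the action axioms reduce to the fibrant reformulation of additivity exactly as in Proposition~\ref{prop:Xext}, and the equivariance of $\tilde\nu$ is checked by the same conjugation computation in $\tilde G$, with the $\pm s$ terms cancelling and the remaining four $\eta$-terms matching Definition~\ref{def:etaderived} verbatim. Your explicit remark that $\phi(x,g)$ must be interpreted for arbitrary $g\in G$ (via the $\eta$-derived formula) is a small point the paper glosses over, but it does not change the argument.
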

 
 \begin{proof}
The well-definedness of the action is similar to the proof of Proposition~\ref{prop:Xext}.
Then one checks the $\tilde{G}$-rack condition:
$$\tilde{\nu}( (x,a) \cdot (g,s) ) = \tilde{\nu}( x \cdot g , a + \phi( x, g)) =
( \nu(x \cdot g ) , a + \phi( x, g)) = ( g^{-1} \nu(x) g ,  a + \phi( x, g)), $$
and 
\begin{eqnarray*}
\lefteqn{ (g, s)^{-1} \tilde{\nu} (x,a)  (g,s) }\\
&=& 
(g^{-1}, -s - \eta(e,e) - \eta(g,g^{-1} ) )  ( \nu (x),  a ) (g, s) \\
&=&
(g^{-1} \nu (x) , a  - s  - \eta(e,e) - \eta(g, g^{-1} )+ \eta(g^{-1}, \nu (x) ) ) (g,s) \\
&=&
(g^{-1} \nu (x) g , a  - s - \eta(e,e) - \eta(g, g^{-1}) + \eta(g^{-1}, \nu (x)) + s + \eta(g^{-1} \nu (x) , g) )
\end{eqnarray*}
so that the condition holds from the assumption that $\phi$ is $\eta$-derived. 
\end{proof}

%
%
%

\begin{corollary}\label{cor:compatibility}
Let $(X,G)$ be an augmented rack, and $A$ an abelian group.
Let $\phi \in Z^2_{RF+}(X,A)$ be fibrant-additive and $\eta$-derived.
		Then the equality
		$$
		\phi(x,w) + \phi(y,w) + 
		\eta(\nu(w)^{-1} \nu(x) \nu(w), \nu(w)^{-1} \nu(y) \nu(w) ) 
		= \phi(z,w)+ \eta(\nu(x), \nu(y) )
		$$
	holds 	for all $x,y,z,w \in X$ such that $\nu(x)\nu(y)=\nu(z)$.
\end{corollary}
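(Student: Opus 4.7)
The plan is to derive the identity directly from associativity in the central group extension $\tilde G = G \times A$ determined by $\eta$, without touching the $2$-cocycle condition for $\eta$ in isolation. The key observation is that, when one lifts elements of $G$ to $\tilde G$ by $h \mapsto (h,0)$, the $\eta$-derived formula for $\phi$ is precisely the $A$-component of a conjugation computation in $\tilde G$.

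First I would record the following auxiliary identity: for any $u \in X$ with $\nu(u) = h$, and with $g = \nu(w)$, unwinding the product in $\tilde G$ using $(g,0)^{-1} = (g^{-1},\, -\eta(e,e) - \eta(g,g^{-1}))$ and the extension product rule gives
\begin{equation*}
(g,0)^{-1}(h,0)(g,0) \;=\; \bigl(g^{-1} h g,\; \phi(u,w)\bigr),
\end{equation*}
where the $A$-component matches the $\eta$-derived formula verbatim. Fibrancy of $\phi$ ensures that the right-hand side depends only on $\nu(u) = h$ and $\nu(w) = g$, so this assignment is unambiguous and equals $\phi(u,w)$.

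Next, I would set $a = \nu(x)$, $b = \nu(y)$ (so $\nu(z) = ab$) and compute the product $(g,0)^{-1}(a,0)(b,0)(g,0)$ in two ways. Inserting $(g,0)(g,0)^{-1}$ between the middle factors and applying the auxiliary identity to each conjugated term yields
\begin{equation*}
\bigl(g^{-1}ag,\, \phi(x,w)\bigr)\cdot\bigl(g^{-1}bg,\, \phi(y,w)\bigr) = \bigl(g^{-1}abg,\; \phi(x,w)+\phi(y,w)+\eta(g^{-1}ag,\, g^{-1}bg)\bigr).
\end{equation*}
On the other hand, collapsing $(a,0)(b,0) = (ab,\eta(a,b))$ first and then conjugating by $(g,0)$, using that $A$ is central and applying the auxiliary identity with $h = ab = \nu(z)$, gives
\begin{equation*}
(g,0)^{-1}(ab,\eta(a,b))(g,0) = \bigl(g^{-1}abg,\; \phi(z,w)+\eta(a,b)\bigr).
\end{equation*}
Associativity in $\tilde G$ forces the two expressions to agree, and equating their $A$-components produces the claimed equality.

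The only subtlety, rather than a serious obstacle, is bookkeeping: one has to verify that the stray $-\eta(e,e)$ and $-\eta(g,g^{-1})$ contributions coming from the inverse in $\tilde G$ are absorbed exactly by the first two terms of the $\eta$-derived formula, which is what makes the auxiliary identity clean. Once that lemma is in hand, the corollary is essentially a rewriting of associativity in $\tilde G$. A purely algebraic alternative—substituting the $\eta$-derived formula into both sides and repeatedly applying $\delta^2_G \eta = 0$—is available but considerably less transparent and does not explain conceptually why the relation must hold.
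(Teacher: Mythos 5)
Your proof is correct and follows essentially the same route as the paper's: both arguments compute the conjugate of a product in the central extension $\tilde G = G\times A$ in two ways and compare $A$-components, and your auxiliary identity $(g,0)^{-1}(h,0)(g,0)=\bigl(g^{-1}hg,\phi(u,w)\bigr)$ is exactly the conjugation computation the paper carries out (with general $A$-components) via the augmentation map $\tilde\nu$ in Proposition~\ref{prop:Gext}. The only cosmetic difference is that you work directly with the lifts $(h,0)$ in $\tilde G$ rather than with $\tilde\nu$ applied to elements of the extension rack $\tilde X$.
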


\begin{proof}
We use the notation in Proposition~\ref{prop:Gext}. 
Note that 
$$\tilde{\nu}  (x, a) \tilde{\nu}  (y,b) = ( \nu (x) \nu (y) , a + b + \eta ( \nu (x) , \nu (y) ) ) .$$
If $\nu (x) \nu (y) = \nu (z)$, then we have
$$
( \nu (x), a) (\nu (y) , b)
=
( \nu (x) \nu (y) , a + b + \eta ( \nu (x) , \nu (y) ) )
=
\tilde{\nu } (z,  a + b + \eta ( \nu (x) , \nu (y) ))
 . $$
 Hence we have 
\begin{eqnarray*}
\lefteqn{\tilde{\nu} ( (x, a)*(w,d) ) \tilde{\nu} ( (y,b)*(w,d) )}\\
&=& \tilde{\nu} ( (x, a) \cdot \tilde{\nu}  (w,d) ) \tilde{\nu} ( (y,b)\cdot \tilde{\nu}  (w,d)  ) \\
&=& [  \tilde{\nu}  (w,d)^{-1} \tilde{\nu}  (x, a) \tilde{\nu}  (w,d) ]\ 
[ \tilde{\nu}  (w,d)^{-1} \tilde{\nu}  (y,b) \tilde{\nu}  (w,d) ] \\
&=& \tilde{\nu}  (w,d)^{-1}\ [  \tilde{\nu}  (x, a) \tilde{\nu}  (y,b) ] \ \tilde{\nu}  (w,d) \\
 &=& \tilde{\nu} ( (z, a + b + \eta ( \nu (x) , \nu (y) ) \cdot \tilde{\nu}  (w,d) )) \\
&=& \tilde{\nu} ( (z, a + b + \eta ( \nu (x) , \nu (y) ) * (w,d) )) \\
&=& (\nu (z*w) , a + b + \eta ( \nu (x) , \nu (y) ) + \phi(z, w)) .
\end{eqnarray*}
We also  obtain
\begin{eqnarray*}
\lefteqn{ \tilde{\nu} ( (x, a)*(w,d) ) \tilde{\nu} ( (y,b)*(w,d) ) }\\
&=& ( \nu(x*w) , a+\phi(x,w) ) ( \nu(y*w) , b+ \phi(y,w) ) \\
&=& ( \nu(x*w)  \nu(y*w) , a+\phi(x,w) + b+ \phi(y,w) + \eta( \nu(x*w) , \nu (y*w) )) ,
\end{eqnarray*}
which implies the equality as desired.
 \end{proof}

 \begin{definition}\label{def:aug_Ext}
	{\rm 
	Let $(X,G)$ be an augmented rack,   $A$ be an abelian group,
	and $\phi \in Z^2_{RF+}(X,A)$, $\eta \in Z^2_{G}(X, A)$, such that $\phi$ is $\eta$-derived.
	 Then, the extension defined in Proposition~\ref{prop:Gext}, $(X\times A, G\times A)$, is 
	 called an {\it augmented (simulteneous) extension} of $(X,G)$ by $(\phi, \eta)$.
	
}
\end{definition}

Next, we define an equivalence relation among augmented extensions.

\begin{definition}\label{def:Ext_morphism}
	{\rm 
Let $(\tilde X_i, \tilde G_i)$, for $i = 1,2$, denote two augmented extensions of the augmented rack $(X,G)$, where $\tilde X_i = X\times A$ and $\tilde G_i = G\times A$ as sets for both $i = 1,2$,
by $(\phi_i, \eta_i)$, where $\phi_i \in Z^2_{RF+}(X_i, A)$ and $\eta_i \in Z^2_{G}(G, A)$,
as defined in Proposition~\ref{prop:Gext}.
Denote the augmentation maps by $\tilde{\nu}_i : \tilde{X}_i \rightarrow \tilde G$.
%
%

Then, a morphism of augmented extensions is a pair of 
maps
  $F_X : \tilde{X}_1 \rightarrow \tilde X_2$ and $F_G: \tilde G_1 \rightarrow \tilde G_2$,  where $F_G$ is a group homomorphism, such that 
$p_1=p_2 \circ F_X$ and $p_1 = p_2 \circ F_G$ with respective projections $p_i$, $i=1,2$ in the same letter, 
satisfying the following diagrams commute.\\
\begin{center}
		\begin{tikzcd}
		\tilde X_1 \arrow[rr,"F"]\arrow[rd," p_1" swap] && \tilde X_2\arrow[ld," p_2"]\\
		   &X &   
		   \\[-15pt] &(1)& 
		\end{tikzcd}
\hspace{16mm}
	\begin{tikzcd}
		\tilde G_1 \arrow[rr,"F"]\arrow[rd," p_1" swap] && \tilde G_2\arrow[ld," p_2"]\\
		   &G &   
		      \\[-15pt]  &(2)& 
		\end{tikzcd} 
		\\
%
%
%
%
\quad			\begin{tikzcd}
				\tilde X_1\times \tilde G_1 \arrow[rr]\arrow[d,"F_X \times F_G" swap] &&\tilde X_1\arrow[d,"F_X"]\\
				\tilde X_2\times \tilde G_2 \arrow[rr] && \tilde X_2  
				   \\[-15pt]  &(3)& 
				\end{tikzcd}
		\end{center}
	An isomorphism, is a 
 morphism where both $F_X$ and $F_G$ are bijections. Observe that in this case the inverses automatically satisfy the commutativity of the previous diagrams, guaranteeing that $(F^{-1}_X,F^{-1}_G)$ is a morphism as well.
	}
\end{definition}


\begin{definition}\label{def:mixed-coh}
	{\rm 
	Let $(X,G)$ be an augmented rack, and let $A$ be an abelian group. We define the {\it augmented rack second cocycle group} with coefficients in $A$, denoted by $Z^2_{AR}((X,G),A)$, to be  the subgroup of the direct sum $ Z_{RF+}^2(X,A) \oplus Z^2_{GN}(G,A)$ generated by 
$$\{ \phi(x,y) + \eta(\nu(y), \nu^{-1}) - \eta(\nu(y)^{-1}, \nu (x)) - \eta(\nu(y)^{-1} \nu (x) , \nu(y)) 	
\mid x, y  \in X \} . $$
Then we define the {\it augmented rack second cohomology group} with coefficients in $A$, denoted by $H^2_{AR}((X,G),A)$, to be the quotient $Z^2_{AR}((X,G),A)/ {\rm Im} (\delta^1_{R} + \delta^1_{G} )$.
}
\end{definition}

\begin{theorem}
	Let $(X,G)$ be an augmented rack, and let $A$ be an abelian group. Then the isomorphism classes of augmented extensions of $(X,G)$ by $A$ with respect to cocycles $(\phi, \eta)$ are in bijection with 
	the augmented rack cohomology group $H^2_{AR}((X,G),A)$. 
\end{theorem}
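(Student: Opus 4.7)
The plan is to adapt the bijection argument from Theorem~\ref{thm:Xext} to the simultaneous rack/group setting, tracking a pair of $1$-cochains $(\xi, \zeta) \in C^1_{RF}(X,A) \oplus C^1_G(G,A)$ tied together through the augmentation map $\nu$. Well-definedness of the map from cohomology classes to isomorphism classes of extensions is immediate from Proposition~\ref{prop:Gext}, which turns any valid cocycle pair into an augmented extension, so the substantive work is showing that the correspondence descends to, and is injective on, cohomology.

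For the forward direction, let $(\tilde X_i, \tilde G_i)$ for $i=1,2$ be augmented extensions of $(X,G)$ with defining cocycle pairs $(\phi_i, \eta_i)$, and let $(F_X, F_G)$ be an isomorphism of extensions. The commutative triangles (1) and (2) of Definition~\ref{def:Ext_morphism} force
\[ F_X(x, a) = (x, a + \xi(x)), \qquad F_G(g, s) = (g, s + \zeta(g)) \]
for unique maps $\xi \colon X \to A$ and $\zeta \colon G \to A$. Compatibility with the augmentations $\tilde \nu_i(x,a) = (\nu(x), a)$ from Proposition~\ref{prop:Gext}, which must hold for $(F_X, F_G)$ to be a morphism of augmented structures, enforces $\xi = \zeta \circ \nu$; in particular $\xi$ is fibrant and equals the pullback $\nu^\sharp \zeta$. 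The condition that $F_G$ is a group homomorphism is then equivalent to $\eta_2 - \eta_1 = \delta^1_G \zeta$, and expanding the action-compatibility diagram (3) using the explicit formula for the rack extension of Proposition~\ref{prop:Gext} yields $\phi_2 - \phi_1 = \delta^1_R \xi$. Hence $(\phi_1, \eta_1)$ and $(\phi_2, \eta_2)$ define the same class in $H^2_{AR}((X,G),A)$.

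Conversely, given cocycle pairs differing by $(\delta^1_R \xi, \delta^1_G \zeta)$ with $\xi = \zeta \circ \nu$, the same explicit formulas for $F_X$ and $F_G$ produce an isomorphism of extensions, the verifications being the reverse of the above computations. The main obstacle, which also clarifies the correct reading of the coboundary subgroup in Definition~\ref{def:mixed-coh}, is showing that the operation $(\phi, \eta) \mapsto (\phi + \delta^1_R \xi, \eta + \delta^1_G \zeta)$ preserves the $\eta$-derived condition relating $\phi$ and $\eta$. A direct substitution in the formula of Definition~\ref{def:etaderived} shows that this is the case precisely when $\xi = \zeta \circ \nu$, so ${\rm Im}(\delta^1_R + \delta^1_G)$ in Definition~\ref{def:mixed-coh} must be understood as acting on the subgroup of $C^1_{RF}(X,A) \oplus C^1_G(G,A)$ consisting of such compatible pairs. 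Preservation of the fibrant-additive and normalization conditions under these coboundary operations then follows from Lemma~\ref{lem:delta_additive} together with Corollary~\ref{cor:compatibility}, completing the construction of mutually inverse assignments between isomorphism classes of augmented extensions and elements of $H^2_{AR}((X,G),A)$.
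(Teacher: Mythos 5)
Your proposal follows essentially the same route as the paper's proof: extract $\xi$ and $\zeta$ from the commutative triangles over $X$ and $G$, show the homomorphism and action-compatibility conditions force $\phi_2-\phi_1=\delta^1_R\xi$ and $\eta_2-\eta_1=\delta^1_G\zeta$, and reverse the computation for the converse. Your explicit verification that the augmentation compatibility forces $\xi=\zeta\circ\nu$ and that the coboundary operation preserves $\eta$-derivedness under this constraint is a point the paper's proof passes over rather quickly, and is a welcome clarification of how ${\rm Im}(\delta^1_R+\delta^1_G)$ in Definition~\ref{def:mixed-coh} should be read.
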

\begin{proof}
Let $(\tilde X_i, \tilde G_i)$, for $i = 1,2$, denote two equivalent 
augmented extensions of the augmented rack $(X,G)$, where $\tilde X_i = X\times A$ and $\tilde G_i = G\times A$ as sets for both $i = 1,2$,
by $(\phi_i, \eta_i)$, where $\phi_i \in Z^2_{RF+}(X_i, A)$ and $\eta_i \in Z^2_{G}(G, A)$,
and denote the augmentation maps by $\tilde{\nu}_i : \tilde{X}_i \rightarrow \tilde G$,
as in Definition~\ref{def:Ext_morphism}, through isomorphisms $F_X: \tilde X_1 \rightarrow \tilde X_2$
and $F_G: \tilde G_1 \rightarrow \tilde G_2$.

By arguments similar to the proof of Theorem~\ref{thm:Xext},
from isomorphisms $F_X$ and $F_G$ in the commutative diagrams (1) and (2) in Definition~\ref{def:Ext_morphism},
 we have that
for all $(x,a) \in \tilde X_1$ and $(g,b) \in \tilde G_1$  there exist $\xi(x), \zeta (g) \in A$ such that 
$F_X ( x, a) = (x, a + \xi(x))$ and $F_G (g,b) = (g, b+ \zeta (g)) $. 
Commutativity of diagrams $(1)$ and $(3)$ implies that $F_X$ is a rack homomorphism, which combined with the fact that $F_G$ is a group homomorphism by hypothesis,  
implies that $[\phi_1]=[\phi_2]$ and 
$[\eta_1]=[\eta_2]$ through $\xi$ and $\zeta$. 
Since $(\tilde X_i, \tilde G_i)$ are augmented rack extensions through $(\phi_i, \eta_i)$, 
it also follows that $\phi_i$ is $\eta_i$-derived, for $i=1,2$. 
Hence $[ (\phi_1, \eta_1) ] = [ (\phi_2, \eta_2)] \in H^2_{AR}( (X,G),A)$.
Below we check that the rest of the commutative diagrams for isomorphisms do not impose 
additional constraint.

One computes, for $(x,a) \in \tilde X_1$, 
\begin{eqnarray*}
(\nu_2 \circ F_X)  (x,a) &=& \nu_2 ( x, a + \zeta(x) ) \ = \ (\nu (x) , a + \zeta(x) ) , \\
(F_G \circ \nu_1) (x,a) &=& F_G ( \nu_1( x,a ))  \ = \  F_G ( \nu (x) ,a ) \ = \ (\nu (x) , a + \zeta(x) ),
\end{eqnarray*}
hence (3) in Definition~\ref{def:Ext_morphism} commutes under the assumption. 

For (4),  one computes, for $(x,a) \in \tilde X_1$ and $(g,b) \in \tilde G_1$, 
\begin{eqnarray*}
F_X ( (x,a) \cdot (g,b) ) &=& F_X ( x \cdot g, a + \phi_1 (x, g) ) \ = \  (x \cdot g, a +  \phi_1 (x, g) + \xi(x \cdot g) ) , \\
 F_X (x,a) \cdot F_G (g,b) &=& (x, a + \xi (x) ) \cdot (g, b + \zeta (g) ) \ = \
 (x \cdot g, a + \xi(x) + \phi_2 (x, g) ) ,
 \end{eqnarray*}
so that we obtain 
$\phi_1 (x, g) =   \phi_2 (x, g) +\xi(x)  - \xi(x \cdot g)  = \phi_2 (x,g) + (\delta^1_R \xi ) (x, g)$,
and we obtain that $[\phi_1]=[\phi_2] \in Z^2_{RF+} (X,A)$. 
Thus an isomorphism implies $[ (\phi_1, \eta_1) ] = [ (\phi_2, \eta_2)] \in H^2_{AR}( (X,G),A)$.

Conversely, if $[(\phi_1, \eta_1) ] = [ (\phi_2, \eta_2)] \in H^2_{AR}( (X,G),A)$,
then there are 1-cochains $\xi \in C^1_{R}(X,A)$ and $ \zeta \in C^1_G(G,A)$
such that $\phi_1=\phi_2+\delta_R \xi$ and $\eta_1=\eta_2 + \delta^1_G \zeta$.
Define $F_X: \tilde X_1 \rightarrow \tilde X_2$ and $F_G: \tilde G_1 \rightarrow \tilde G_2$ 
by $F_X (x,a)=(x, a+\xi(x))$ and $F_G (g,b)=(g, b+\zeta(g))$, 
then it is readily checked that the diagrams (1) through (4) commute.
\end{proof}

%

\section{Colorings and cocycle invariants of 
surface ribbon 
diagrams by augmented racks}\label{sec:inv}

\subsection{Colorings of trivalent graphs representing surface ribbons}

Let $(X, G)$ be an augmented symmetric rack with a good involution $\rho$.
Let $D$ be a graph diagram of a surface ribbon $S$, and ${\mathcal A}(D)$ the set of 
arcs of $D$. 
Let $\vec{D}$ denote $D$ with an 
orientation specified, and 
${\mathcal A}(\vec{D})$ the set of directed arcs of $\vec{D}$.

An orientation of edges are also specified by normal vectors of edges, that are obtained from
the orientation of an edge by rotating it counterclockwise by $90$ degrees as in Figure~\ref{Y}. 

The {\it sign} $\epsilon (\vec{a}, v) $ of an oriented edge $\vec{a}$ incident to a vertex $v$  is defined,
following \cite{IIJO},  by
$$
\epsilon (\vec{a}, v) =\begin{cases}
1  & {\rm if \ the \ orientation \ of } \ \vec{a}\  {\rm  points \  into } \ v  , \\ 
-1 & {\rm otherwise}. 
\end{cases}
$$
In Figure~\ref{Y} (A), three edges are labeled by $a_i$, $i=1,2,3$. 
In (B), all-in orientations are specified to give oriented edges $\vec{a}_i$,
and in this case, we have $\epsilon (\vec{a}_i, v) =1$ for $i=1,2,3$. 
In (C), the orientations of the  top two arcs are reversed to give $\vec{a}_i \, '$ for $i=1,2$.
The labels $g,h,k$ are used below.

\begin{figure}[htb]
\begin{center}
\includegraphics[width=2.5in]{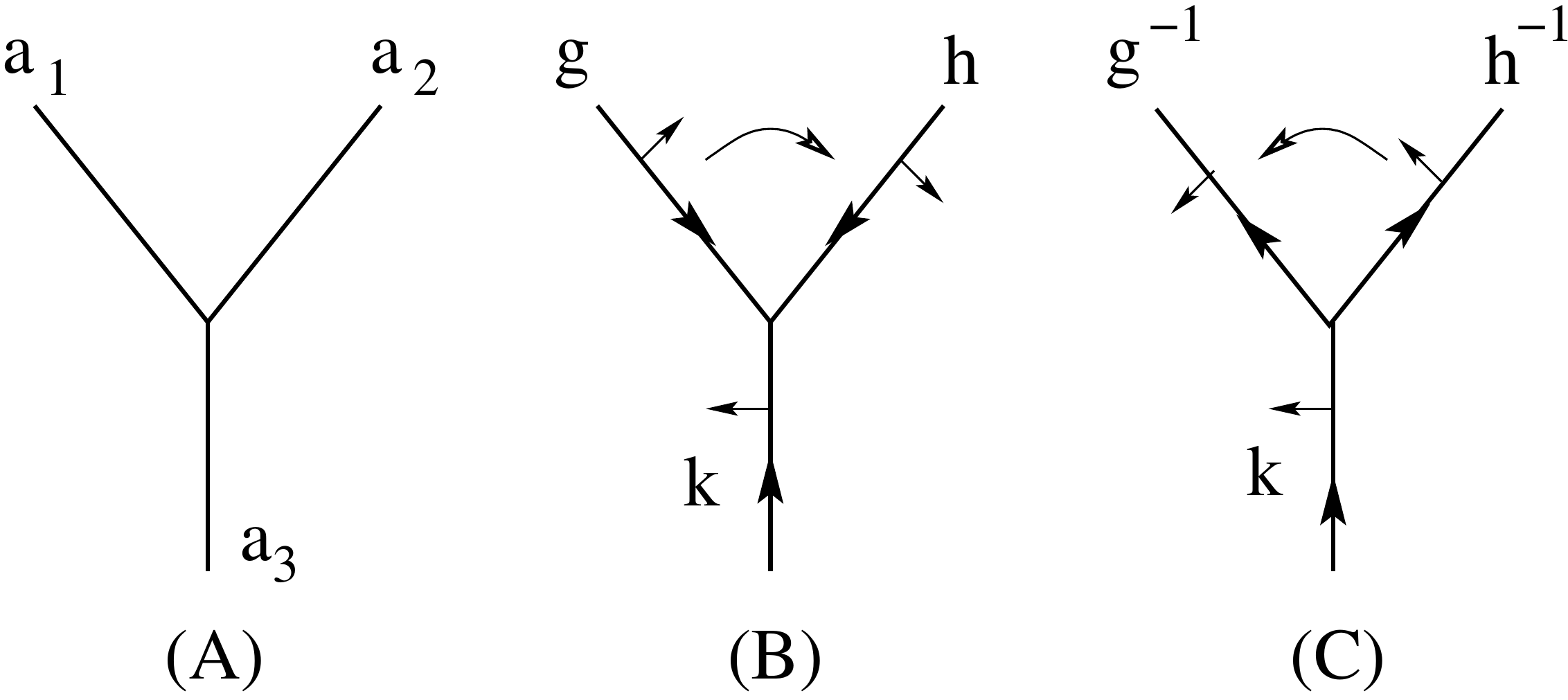}
\end{center}
\caption{Some orientation possibilities at a vertex}
\label{Y}
\end{figure}

A {\it coloring} of $\vec{D}$ by an augmented rack $(X,G)$ with a good involution $\rho$ is a 
${\mathcal C} : {\mathcal A}(\vec{D}) \rightarrow X$,
such that the coloring conditions 
described below are satisfied at every crossing and vertex.
In the figures, the letter assigned to each directed 
arc $\vec{a}$ represents an element of $X$ that is the image
${\mathcal C}(\vec{a}) \in X$ called a {\it color} of the directed arc $\vec{a}$. 

$(C1)$ First, we require that if a color of an oriented arc $\vec{a}$ is $x \in X$, then 
the color of the same edge with reversed orientation $\vec{a}\, '$ is $\rho(x)$.

$(C2)$ At each crossing, the coloring condition is as depicted in Figure~\ref{fig:positivenegative}.

$(C3)$ At each vertex $v$, if the colors on the incident edges $\vec{a}, \vec{b}, \vec{c}$ 
in clockwise are $x,y,z \in X$,
respectively, then it is required that 
$$ \nu(x)^{\epsilon (\vec{a} , v)}  \nu(y)^{\epsilon (\vec{b}, v)}  \nu(z)^{\epsilon (\vec{c}, v)} =e,$$
the identity element of $G$.

In Figure~\ref{Y} (B), the oriented edges are labeled by $\nu(x)=g, \nu(y)=h$ and $ \nu(z)=k$.
With these orientations in the figure, we have $ghk=e$ from (C3).
In (C), the orientations of the  top two arcs are reversed, so that the colors are changed 
for $\vec{a}_1\, '$ and  $\vec{a}_2\, '$ are changed to $\rho(x)$ and $\rho(y)$, respectively, 
and labeled by $\nu (\rho(x) )=g^{-1} $ and $\nu ( \rho(y)) = h^{-1}$.

The set of colorings of $\vec{D}$ by an augmented rack $(X, G) $ with a good involution $\rho$ is simply denoted by 
${\rm Col}_X (\vec{D})$. 

The following was mentioned in \cite{ClaSa}.

\begin{lemma}\label{lem:bij}
Let $(X, G)$ be an augmented rack.
If $X$ is finite and connected, then the fibers $\nu^{-1} (g)$ are in bijection over the image 
$g \in {\rm Im}(\nu)$ of $\nu$. 
\end{lemma}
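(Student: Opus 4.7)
The plan is to use connectedness to produce, for any two elements $g,h\in\mathrm{Im}(\nu)$, a single group element $\alpha\in G$ whose right action on $X$ restricts to a bijection $\nu^{-1}(g)\to\nu^{-1}(h)$. The key observation is that every inner automorphism of $X$, expressed as a composition of the generators $S_y^{\pm 1}$, acts on $X$ by right multiplication by one element of $G$, because $S_y(x)=x\cdot\nu(y)$ and $S_y^{-1}(x)=x\cdot\nu(y)^{-1}$.

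First I would pick representatives $y\in\nu^{-1}(g)$ and $z\in\nu^{-1}(h)$. By connectedness, there exist elements $y_1,\ldots,y_n\in X$ and signs $\epsilon_i\in\{\pm 1\}$ such that $(S_{y_n}^{\epsilon_n}\circ\cdots\circ S_{y_1}^{\epsilon_1})(y)=z$. Setting $\alpha:=\nu(y_1)^{\epsilon_1}\cdots\nu(y_n)^{\epsilon_n}\in G$, this composition equals $x\mapsto x\cdot\alpha$, so $z=y\cdot\alpha$. Applying $\nu$ and using the augmented rack identity $\nu(x\cdot k)=k^{-1}\nu(x)k$ yields $h=\nu(z)=\alpha^{-1}g\alpha$.

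Next, I would define $\Phi_\alpha:X\to X$ by $\Phi_\alpha(x)=x\cdot\alpha$. Since $G$ acts on $X$, $\Phi_\alpha$ is a bijection of $X$ with inverse $\Phi_{\alpha^{-1}}$. For any $x\in\nu^{-1}(g)$, $\nu(\Phi_\alpha(x))=\alpha^{-1}\nu(x)\alpha=\alpha^{-1}g\alpha=h$, so $\Phi_\alpha$ maps $\nu^{-1}(g)$ into $\nu^{-1}(h)$; symmetrically $\Phi_{\alpha^{-1}}$ maps $\nu^{-1}(h)$ into $\nu^{-1}(g)$. Being mutual inverses on $X$, their restrictions are mutually inverse bijections between the two fibers.

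There is no real obstacle: the only subtlety is the passage from the abstract transitivity of the inner automorphism group to an explicit $\alpha\in G$ that conjugates $g$ to $h$, which is made immediate by the form $S_y(x)=x\cdot\nu(y)$. Finiteness of $X$ is not used in the argument and only serves to upgrade the bijection to the statement that all fibers above $\mathrm{Im}(\nu)$ share a common (finite) cardinality.
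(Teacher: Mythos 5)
Your proposal is correct and follows essentially the same route as the paper: both use connectedness to produce a single group element ($\alpha$ in your notation, $k$ in the paper's) as a product of $\nu$-images along a connecting sequence, then invoke the identity $\nu(x\cdot\alpha)=\alpha^{-1}\nu(x)\alpha$ to see that the right action by that element restricts to a bijection $\nu^{-1}(g)\to\nu^{-1}(h)$. If anything, yours is marginally more careful, since you allow the signs $\epsilon_i=\pm1$ demanded by the definition of the inner automorphism group and you exhibit the two-sided inverse $\Phi_{\alpha^{-1}}$ explicitly; your observation that finiteness is not actually needed is also accurate.
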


\begin{proof}
Let $g, h \in {\rm Im}(\nu)$, and we establish a bijection between $\nu^{-1} (g)$ and $\nu^{-1} (h)$. 
Let $x \in \nu^{-1} (g)$ and $y \in \nu^{-1} (h)$.
Since $X$ is finite and connected, 
there exists  a sequence $x_0,  x_1, \ldots, x_n \in X$ such that 
$x=x_0$, $x_n=y$ and $x_{i+1}=x_{i-1} * x_i$ for all $i=1, \ldots, n-1$.
Set $k=\nu (x_1) \cdots \nu (x_{n-1})$, then $y=x \cdot k$, and 
$h= \nu(y)=\nu( x \cdot k ) = k^{-1} \nu(x) k = k^{-1} g k $. 
Since $ * x_i$ is a bijection on $X$ for each $i$, $\cdot k$ defines a bijection on $X$. 
For any $z \in \nu^{-1} (g)$, we have $\nu (z \cdot k )=k^{-1} \nu (z) k =k^{-1} g k = h$, 
hence the image of $\cdot k$ is in $\nu^{-1} (h)$.
Therefore $\cdot k$ defines a bijection $\nu^{-1} (g) \rightarrow \nu^{-1} (h)$.
\end{proof}

\begin{theorem}\label{thm:inv}
Let $(X,  G )$ be a finite  augmented connected rack with a good involution $\rho$.
Let $\vec{D}$ be an oriented  diagram of a trivalent spatial graph.
Then the set of colorings ${\rm Col}_X( \vec{D})$ of $\vec{D}$ by $X$ 
is in bijection  under  each of the moves in Figure~\ref{moves}, and therefore, 
its cardinality is an invariant of surface ribbons.
\end{theorem}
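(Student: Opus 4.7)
The plan is to verify move by move that the set $\mathrm{Col}_X(\vec D)$ is carried bijectively onto itself under each of RII, RIII, CL, IY, YI, and IH. For the framed Reidemeister moves RII and RIII the arguments are the familiar ones for oriented rack colorings, adapted to the unoriented framed setting via the good involution $\rho$: right-invertibility of $S_y$ handles RII, self-distributivity $(x*y)*z=(x*z)*(y*z)$ handles RIII, and the defining identities $x*\rho(y)=x\,\bar{*}\,y$ and $\rho(x*y)=\rho(x)$ reconcile the various orientation configurations of the over and under strands so that condition (C1) is preserved throughout. For the curl move CL, recall from Figure~\ref{loop} that the loop encodes two half-twists on a single strand; tracing the color $x$ around the loop and using $\nu(\rho(x))=\nu(x)^{-1}$ shows that the net transport is conjugation by $\nu(\rho(x))\nu(x)=e$, so the color returns to itself and the loop can be inserted or removed in a bijective fashion.

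For the vertex moves IY and YI a strand colored by some $w\in X$ slides through a trivalent vertex, and I would verify directly that condition (C3) is preserved. If the three edges at the vertex carry colors $x,y,z$ with signs $\epsilon_1,\epsilon_2,\epsilon_3$, so that $\nu(x)^{\epsilon_1}\nu(y)^{\epsilon_2}\nu(z)^{\epsilon_3}=e$, then after the slide each of these colors is replaced by $x\cdot\nu(w)^{\pm 1}$, and so on. The augmentation compatibility $\nu(u\cdot g)=g^{-1}\nu(u)g$ then gives that the new product telescopes to $\nu(w)^{\mp 1}\bigl(\nu(x)^{\epsilon_1}\nu(y)^{\epsilon_2}\nu(z)^{\epsilon_3}\bigr)\nu(w)^{\pm 1}=e$, so the vertex relation is preserved. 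The assignment of colors on the two sides of the move is visibly invertible, producing the desired bijection.

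The main obstacle is the IH move. Here the four external arcs retain their colors, but the color of the internal edge is determined by the external data only up to a fiber of $\nu$. Writing down the two vertex relations on the left side of IH shows that every valid internal color lies in $\nu^{-1}(g)$ for a single group element $g$ computable from the external colors; performing the same analysis on the right side produces a generally different element $g'$ with the new internal color confined to $\nu^{-1}(g')$. To finish, I would invoke Lemma~\ref{lem:bij}, whose hypotheses (finiteness and connectedness of $X$) are exactly those of the theorem: it provides an explicit bijection $\nu^{-1}(g)\to\nu^{-1}(g')$ of the form $z\mapsto z\cdot k$ for a suitable $k\in G$ obtained from a connecting sequence in $X$. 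Extending by the identity on the fixed external colors yields the required bijection between colorings on the two sides of IH, completing the verification that $|\mathrm{Col}_X(\vec D)|$ is invariant under all the moves of Figure~\ref{moves} and hence is a surface ribbon invariant.
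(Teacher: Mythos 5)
Your proposal follows essentially the same route as the paper: the classical framed moves are handled by the standard rack-coloring arguments together with the good involution, YI and IY reduce to the additivity of the $G$-action over the vertex relation and its conjugation-invariance, and the IH move — correctly identified as the crux, since the internal edge is determined only up to a fiber of $\nu$ — is resolved by invoking Lemma~\ref{lem:bij} exactly as the paper does. The only minor difference is that the paper also separately records that reversing the orientation of a single edge induces a bijection of colorings (checking condition (C3) via $\nu(\rho(x))^{-\epsilon}=\nu(x)^{\epsilon}$), a point you fold implicitly into your discussion of the involution.
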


\begin{proof}
First we show that the set of colorings is in bijection under reversal of  the orientation 
of an edge $\vec{a}$. Let ${\cal C}$ be a given coloring of an orietned graph diagram $\vec{D}$. 
Let  $\vec{a}$ be an oriented edge, and $\vec{a}\, '$ the same edge with reversed orientation.
Then by the coloring condition $(C1)$, if ${\mathcal C} (\vec{a}) = x$, then 
${\mathcal C} ( \vec{a} \, ' ) $ is uniquely defined to be $ \rho (x)$.
For bijection, we show that this assignment, together with the colors already assigned by ${\mathcal C}$ on all the other edges satisfy the remaining conditions $(C2)$ and $(C3)$.
The condition $(C2)$ is shown in \cite{IIJO}. 
For the condition  $(C3)$, assume $x$ assigned to an arc whose orientation is reversed.
Then $x$ is changed to $\rho(x)$, 
and the sign is reversed to $\epsilon(\vec{a}\, ', v)=- \epsilon(\vec{a}, v)$, so that 
$\nu(x)^{\epsilon(\vec{a}, v)}$ is replaced by  
$$\nu(\rho(x) )^{\epsilon(\vec{a}\, ', v)}=[ \nu(x)^{-1} ]^{[ - \epsilon(\vec{a}, v)] } =\nu(x)^{\epsilon(\vec{a}, v)} ,$$
hence, in fact, it stays unchanged. The other cases are similar.

\begin{figure}[htb]
\begin{center}
\includegraphics[width=3in]{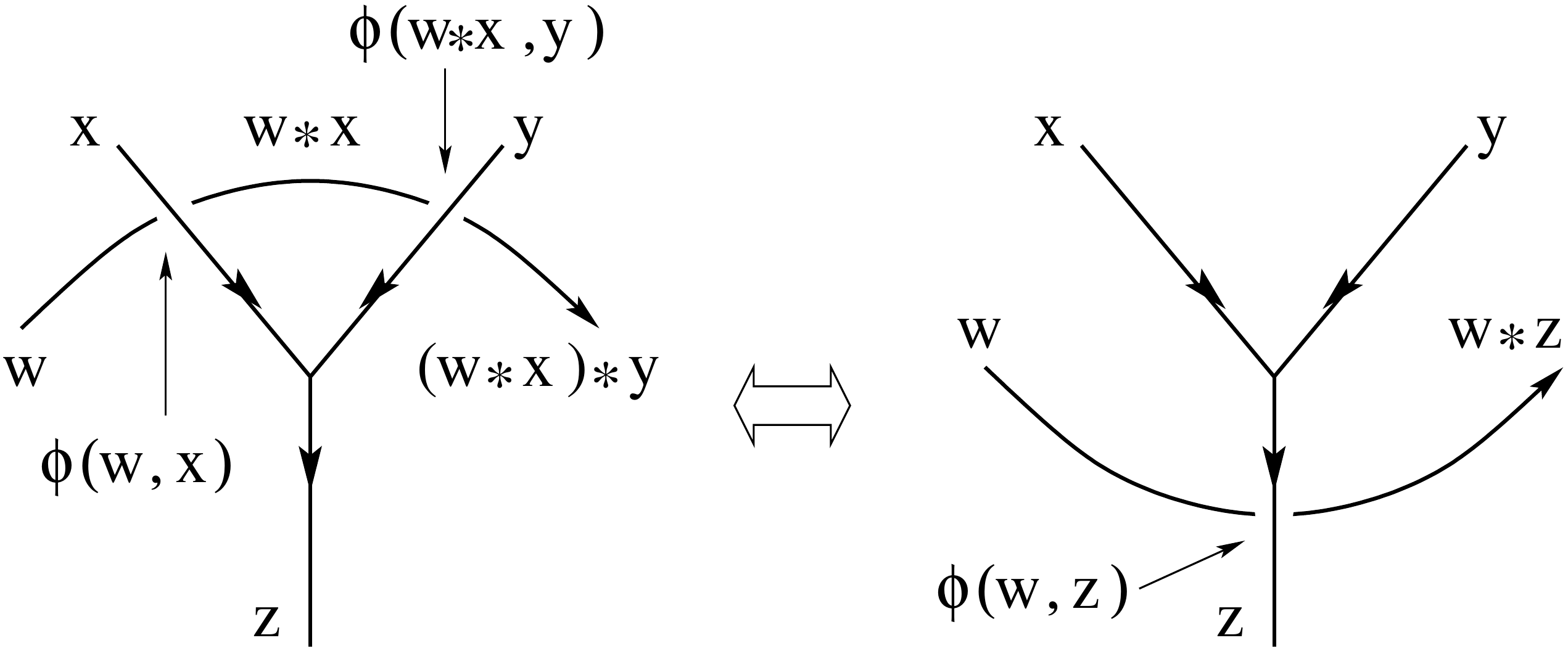}
\end{center}
\caption{Colored and weighted YI move}
\label{YIrack}
\end{figure}

The bijectivity of colorings under the moves that do not involve trivalent vertices are similar to
\cite{IIJO}. 
We check the remaining moves, i.e. we verify the YI, IY and IH moves of Figure~\ref{moves}. Move YI with the colorings is depicted in Figure~\ref{YIrack}. 
Let us denote the colors determined by $\mathcal C$ as in the LHS of Figure~\ref{YIrack}, where 
we do not consider the weights $\phi$ at this moment. 
From the definition of $*$ in an augmented rack, we have that $(w*x)*y = (w\nu(x))*y = w(\nu(x)\nu(y)) = w\nu(z) = w*z$, where in the penultimate equality we have used the assumption that $\nu(x)\nu(y) = \nu(z)$ at a trivalent vertex. It follows that there is a unique map $\mathcal C'$ that gives a compatible color on the arc labeled $w*z$ on the RHS of Figure~\ref{YIrack}. 

\begin{figure}[htb]
\begin{center}
\includegraphics[width=3in]{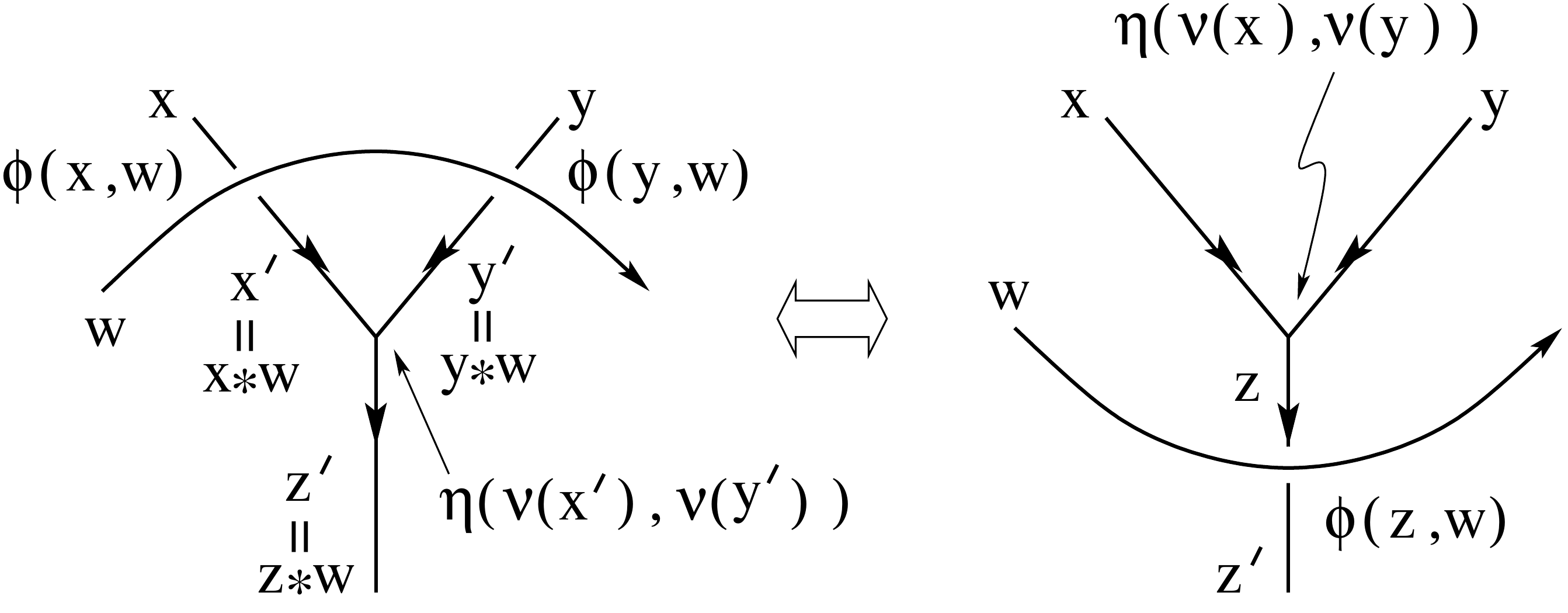}
\end{center}
\caption{Colored and weighted IY move}
\label{IYrack}
\end{figure}

Let us consider now the IY move of Figure~\ref{moves},
with a coloring $\mathcal C$ in LHS of 
 Figure~\ref{IYrack}. 
 We again do not consider the weights represented in the picture. 
 The color at the bottom arc specified by ${\mathcal C}$ is $z'$. 
 The two arcs colored by $x$ and $y$ underpass the arc $w$ and are  colored by labels $x*w$ and $y*w$. From the definition of $\mathcal C$ it follows that the color $z'$ of the remaining edge 
 satisfies 
 $\nu(z') = \nu(x*w)\nu(y*w) $.

 One computes 
 $\nu(z') =\nu(x*w)\nu(y*w) = \nu(x\cdot \nu(w))\nu(y\cdot \nu(w)) = \nu(w)^{-1}\nu(x)\nu(y)\nu(w) $.
 Set $z= z' \bar{*} w$, then $z'=z*w$. 
 One computes 
 $\nu(z')= \nu(z*w)=\nu(z\cdot \nu(w))= \nu(w)^{-1}\nu(z)\nu(w)$, hence we obtain 
 $\nu(x)\nu(y)=\nu(z)$. 
 On the RHS, given the color $z'$ at the bottom, the color $z$ in the figure is required to 
 satisfy $z'=z*w$. Hence this color $z$ is uniquely determined from $z'$ and $w$,
 and satisfies the coloring rule $(C3)$, yielding a unique coloring for the RHS.
 

\begin{figure}[htb]
\begin{center}
\includegraphics[width=2.5in]{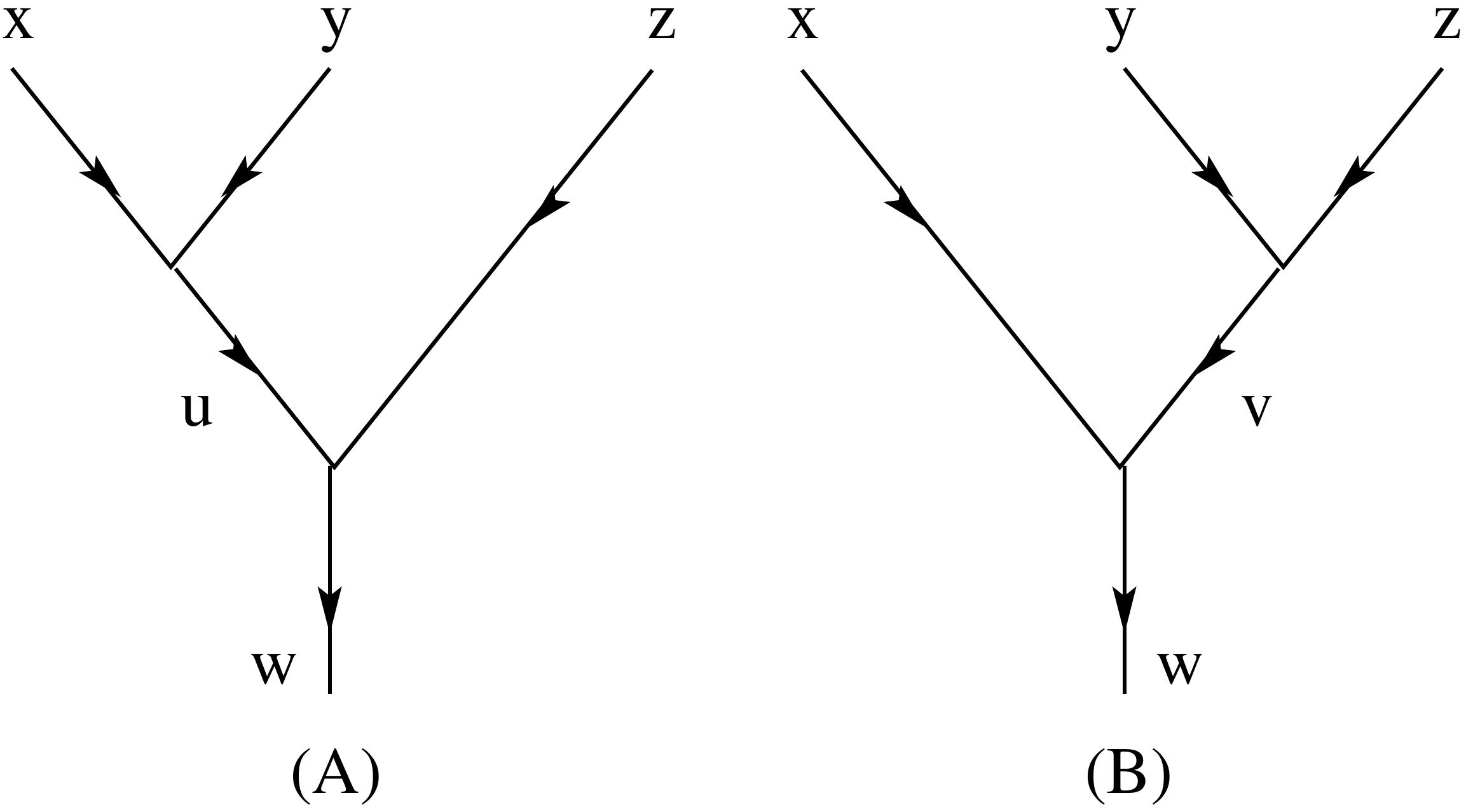}
\end{center}
\caption{Associativity and colors}
\label{IH1}
\end{figure}

 Let us now consider the IH move of Figure~\ref{moves}. The coloring condition corresponds to associativity in $G$, because on the LHS we have $\nu(u) = \nu(x)\nu(y)$ and $\nu(w) = \nu(u)\nu(z)$, while on the RHS we have $\nu(v) = \nu(y)\nu(z)$ and $\nu(w) = \nu(x)\nu(v)$. 
 This situation is depicted in Figure~\ref{IH1}, where $\nu$ is abbreviated.
 For fixed $x, y, z, w$, the possible choices of $u$ that determine the colorings  $\mathcal C$ are determined by all the $u\in X$ such that $u\in \nu^{-1}(\nu(x)\nu(y))$, while on the RHS the colorings $\mathcal C'$ are determined by the maps such that $v \in \nu^{-1}(\nu(y)\nu(z))$. 
 By Lemma~\ref{lem:bij}, these sets are in bijection.
\end{proof}



\subsection{Cocycle invariants from $G$-rack extensions}\label{subsec:Grackinv}

In this section we show that the cocycle invariant can be defined using 
rack cocycles corresponding to $G$-rack extensions defined in Section~\ref{subsec:Xext} when the cocycle satisfies  an additional requirement, which we now define.

\begin{definition}
{\rm
 Let $X$ denote an augmented rack with $\nu: X \rightarrow G$.
	  Let $\phi \in Z^2_{R}(X,A)$ be a rack $2$-cocycle of $X$ with coefficients in the abelian group $A$. 
Then $\phi$ is called {\it pre-additive} if for all $x,y,z \in X$ such that 
$\nu(x)\nu(y)=\nu(z)$,  it holds that $\phi(x,w)+\phi(y,w)=\phi(z,w)$.
}
\end{definition}

\begin{remark}
{\rm
Direct computations show that it is not always the case that a coboundary $\phi = \delta_R \xi$ 
satisfies the pre-additivity.
}
\end{remark}

 Let $X$ denote an augmented rack with $\nu: X \rightarrow G$
	with a good involution $\rho$.
	  Let $\phi$ be a rack $2$-cocycle of $X$ with coefficients in the abelian group $A$,
	  that is fibrant-additive $\phi \in Z^2_{RF+}(X,A)$ and pre-additive.

	  Let $S$ be a surface ribbon, and let us denote by $\vec D$ a diagram of $S$ with some 
	 orientation.  For each coloring  $\mathcal C$ 
	 of $\vec D$, we define the following Boltzmann weight. At each crossing $\tau$ as in Figure~\ref{fig:positivenegative}, we set $B_{\phi}(\mathcal C,\tau) = \phi(x,y)^{\sigma(\tau)}$, where $\sigma(\tau)$ denotes the sign of $\tau$ according to the convention estabilished in Figure~\ref{fig:positivenegative}. Each vertex $v$ of $\vec D$ does not receive a weight.

 \begin{definition}
	 	{\rm 
	 	Let $X$ be an augmented rack with $\nu: X \rightarrow G$ and with a good involution $\rho$, and let $\phi \in Z^2_{\rm RF+}(X,A)$ with coefficients in the abelian group $A$.
		Assume that $\phi$ is pre-additive and 
	 symmetric with respect to $\rho$.
		Let $S$ be a connected surface ribbon and let $\vec D$ denote a diagram of $S$. Then the cocycle invariant of $S$ is defined as
	 	$$
	 	\Psi_{\phi}(S) = 
		\sum_{\mathcal C} 
		\prod_{\tau} B_{\phi}(\mathcal C, \tau)
	 	$$
	 	where the sum runs over all the colorings, 
		and the products are taken over all the crossings of $\vec D$. If $S$ consists of multiple connected components, then one defines the same partition function for each connected component, and $\Psi_{\phi}(S)$ is understood to indicate the tuples containing all the partition functions corresponding to each connected component. 
 		}
	 \end{definition}
 
 \begin{proposition}\label{prop:Xextcocyinv}
 	The cocycle invariant above is well defined with respect to the choice of 
	oriented diagram $\vec D$. Therefore it is an invariant of the isotopy class of the surface ribbon $S$. 
 \end{proposition}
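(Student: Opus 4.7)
The plan is to verify that for each of the local moves of Figure~\ref{moves} and for the operation of reversing the orientation of an edge, there is a weight-preserving bijection between the colorings on either side. Theorem~\ref{thm:inv} already furnishes bijections on the coloring sets under all of these operations, so what remains is to compare the products of Boltzmann weights. Since weights live only at crossings (vertices receive none), only those crossings that are affected by a given move need be tracked.

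I would first handle orientation reversal on a single arc. Under $(C1)$ a color $x$ becomes $\rho(x)$, and at every crossing incident to the reversed arc the sign $\sigma(\tau)$ flips. The two symmetry conditions defining a symmetric $\phi$, namely $\phi(x,y)+\phi(\rho(x),y)=0$ and $\phi(x,y)+\phi(x*y,\rho(y))=0$, are precisely what is needed to compensate the sign change, so the weight product is unchanged. This reduces the whole problem to invariance under the moves of Figure~\ref{moves} for some fixed orientation at each occurrence.

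For the framed Reidemeister moves RII, RIII and CL, the computation is the standard one for rack cocycle invariants of framed links: the two crossings produced by RII contribute canceling weights, the RIII equality is precisely the rack 2-cocycle identity $\delta^2_R\phi=0$, and the CL (curl) move is absorbed by the $\rho$-symmetry of $\phi$ together with the good-involution property, as in \cite{SZframedlinks,Kamada}. The trivalent moves YI and IY are where the additional hypotheses on $\phi$ enter. In YI a strand colored $w$ passes under two arcs $x$ and $y$ meeting at a vertex, contributing $\phi(w,x)+\phi(w*x,y)$, and after the move it passes under the single arc $z$ with $\nu(x)\nu(y)=\nu(z)$, contributing $\phi(w,z)$; these match exactly by the additivity identity of Definition~\ref{def:additive}. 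The mirror situation in IY has the strand on top and the two weight sums $\phi(x,w)+\phi(y,w)$ versus $\phi(z,w)$, which agree by the pre-additivity hypothesis. The IH move produces no crossings, so the weight product is tautologically preserved and one only needs the bijection of colorings already supplied by Lemma~\ref{lem:bij} and Theorem~\ref{thm:inv}.

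The main obstacle I anticipate is the bookkeeping in the YI and IY cases: each of those moves admits several configurations of orientations at the trivalent vertex and at the over/under strand, and for each such configuration the precise evaluations of $\phi$ appearing at the two crossings depend on the signs $\epsilon(\vec a,v)$ and $\sigma(\tau)$. The strategy I would adopt is to reduce to a single ``all-in'' reference orientation at the vertex using the orientation-reversal step, and to a standard crossing sign using the rack cocycle identity; this way the (pre-)additivity identity need only be checked in one normalized configuration, and all remaining cases follow by transport along the bijections already established.
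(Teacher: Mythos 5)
Your proposal is correct and follows essentially the same route as the paper: the non-trivalent moves are handled as in the classical rack/quandle cocycle invariant, orientation reversal is absorbed by the symmetry of $\phi$ with respect to $\rho$, the YI move is matched by the additivity identity, the IY move by pre-additivity, and the IH move contributes no crossing weights so only the fiber bijection of Lemma~\ref{lem:bij} and Theorem~\ref{thm:inv} is needed. The paper's proof is just a terser version of the same case analysis, citing Figures~\ref{YIrack} and \ref{IYrack} for the two trivalent cases you work out explicitly.
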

\begin{proof}
We check the invariance under each move as usual.
The invariance under the moves that do not involve trivalent vertices are proved in the same manner as the original quandle cocycle invariant \cite{CJKLS}.

Invariance under the YI move and IY move follow from the additivity of $\phi$ and the pre-additivity
condition as Figures~\ref{YIrack} and \ref{IYrack} indicates, respectively, and as the relation between the moves.

For the IH move, the bijection between the set of colorings involve the possible colors for $u$ in LHS and $v$ in RHS in Figure~\ref{IH1} as explained in Proof of Theorem~\ref{thm:inv}. 
Since all possible choices for $u$ are from the fiber 
$\nu^{-1} (\nu(x)\nu(y))$, the value of the cocycle evaluation involved remains unchanged, 
and so does for $v$ in RHS. 
Hence the state sum stays invariant.
\end{proof}

\subsection{Cocycle invariants from simultaneous augmented rack extensions}

In this section we define the cocycle invariant using 2-cocycles for the simultaneous extensions 
of augmented racks.
First we recall the following. 	
 
	 \begin{lemma}\cite{CJKLS} \label{lem:gpsymmetry}
		{\rm
			Let $G$ be a group, $A$ an abelian group, and $\eta \in Z^2_G(G, A)$ a 
			 normalized 
			$2$-cocycle.
			Then $\eta$  satisfies 
		\begin{itemize}
				\item[(1)]
			$\eta(g,h)=  - \eta(  gh, h^{-1}) = - \eta(g^{-1} , gh)$ for all $g, h \in G$,
			called the triangle symmetry.
				\item[(2)]
			$\eta(g,h) = - \eta(k,k^{-1}g) + \eta(k^{-1}g,h) + \eta(k,k^{-1}gh)$, for all $g,h,k\in G$. 
		\end{itemize}
				}
	\end{lemma}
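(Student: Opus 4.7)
The plan is to derive both identities directly from the (normalized) group $2$-cocycle condition, which I will write in the form
\[
\eta(y,z) + \eta(x, yz) \;=\; \eta(xy, z) + \eta(x, y),
\]
valid for all $x,y,z \in G$, together with the normalization consequences $\eta(g,e) = \eta(e,g) = \eta(g,g^{-1}) = 0$ recalled in the preliminaries. Each of the three asserted identities will come from a single judicious substitution into this cocycle relation, so there is essentially no obstacle beyond choosing the right variables.

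For the first half of (1), namely $\eta(g,h) = -\eta(gh, h^{-1})$, I would substitute $(x,y,z) = (g, h, h^{-1})$. The left-hand side of the cocycle relation then becomes $\eta(h, h^{-1}) + \eta(g, e) = 0$ by normalization, while the right-hand side equals $\eta(gh, h^{-1}) + \eta(g,h)$; setting the two equal gives the identity. For the second half, $\eta(g,h) = -\eta(g^{-1}, gh)$, I would substitute $(x,y,z) = (g^{-1}, g, h)$: the right-hand side becomes $\eta(e, h) + \eta(g^{-1}, g) = 0$ by normalization, while the left-hand side is $\eta(g,h) + \eta(g^{-1}, gh)$. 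Equating the two sides again gives the stated equality.

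For (2), the substitution $(x,y,z) = (k, k^{-1}g, h)$ directly yields
\[
\eta(k^{-1}g, h) + \eta(k, k^{-1}gh) \;=\; \eta(g, h) + \eta(k, k^{-1}g),
\]
which rearranges to the claimed formula $\eta(g,h) = -\eta(k, k^{-1}g) + \eta(k^{-1}g, h) + \eta(k, k^{-1}gh)$. No normalization is needed at this step; only the cocycle identity.

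Thus the whole lemma reduces to three applications of the cocycle condition with carefully chosen triples, plus the vanishing of $\eta$ on pairs involving $e$ or inverse pairs $(g,g^{-1})$. There is no real obstacle; the only thing to check carefully is that each substitution uses normalization only where it is allowed, which in each case reduces to the identities $\eta(g,e) = \eta(e,g) = \eta(g,g^{-1}) = 0$ already established before the lemma statement.
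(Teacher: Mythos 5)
Your proof is correct: each of the three substitutions into the cocycle relation $\eta(y,z)+\eta(x,yz)=\eta(xy,z)+\eta(x,y)$ checks out, and the normalization facts $\eta(g,e)=\eta(e,g)=\eta(g,g^{-1})=0$ you invoke are exactly those established in the preliminaries. The paper itself gives no proof (it cites \cite{CJKLS}), and your direct verification is the standard argument one would find there.
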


\begin{figure}[htb]
\begin{center}
\includegraphics[width=2.5in]{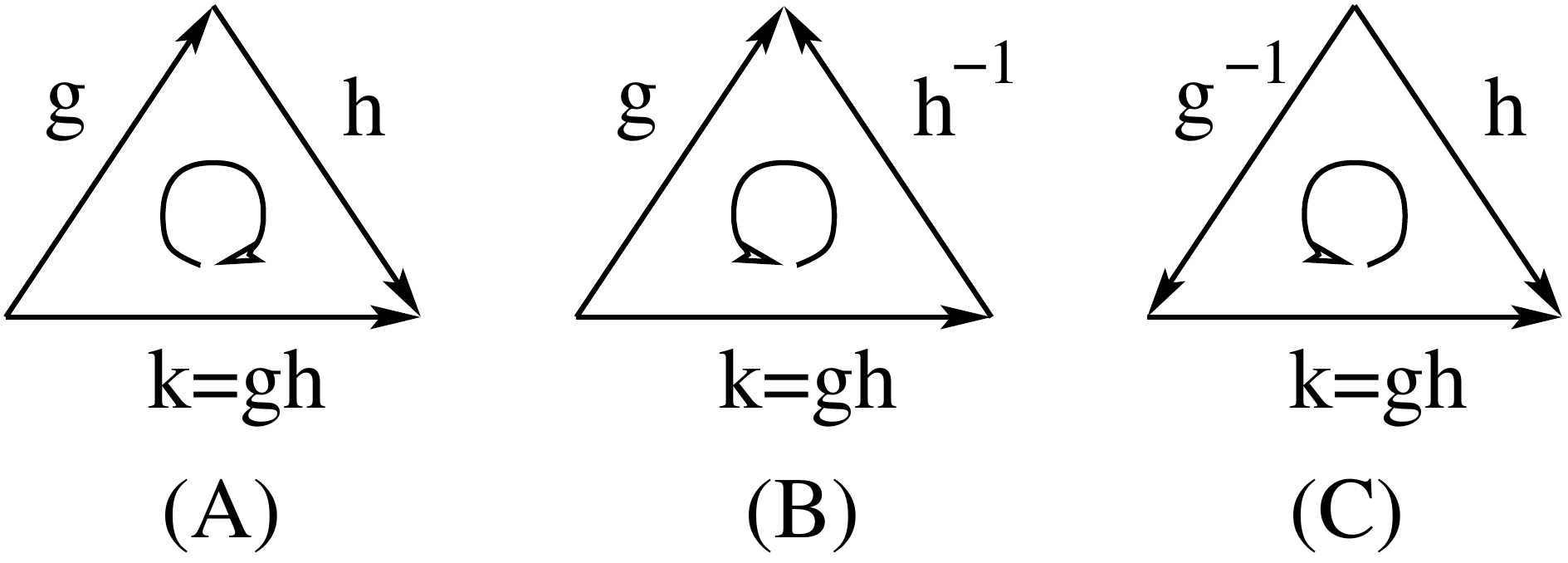}
\end{center}
\caption{Orientation changes in a triangle and triangle symmetry of group 2-cocycles}
\label{triangle}
\end{figure}

In Figure~\ref{triangle}, diagrammatic representations of the equalities (1) in Lemma~\ref{lem:gpsymmetry}  are depicted. A coherent (consecutive) orientations of two edges 
are evaluated for a 2-cocycle, and give an orientation of the triangle as depicted in (A). 
Orientation of one of these two edges are reversed in (B) and (C), which reverses the orientation
of the triangle, as well as group elements to be evaluated, as depicted.
The orientation reversal of the triangle corresponds to the negative sign of the evaluated cocycle.

Recall that at a vertex $v$,  the sign,  $\epsilon(\vec{a}, v )$, of an oriented  edge incident to $v$ 
was defined by $\epsilon(\vec{a}, v )=1$  if $\vec{a} $ points toward $v$, or equivalently, the orientation normal to the given direction is clockwise, 
and otherwise $\epsilon(\vec{a}, v )=-1$.

	 Let $X$ denote an augmented rack with $\nu: X \rightarrow G$
	with a good involution $\rho$.
	  Let $\phi$ be a rack $2$-cocycle of $X$ with coefficients in the abelian group $A$, and let $\eta$ be a group $2$-cocycles with coefficients in $A$. 
	  We further assume that  $\phi$ and $\eta$ are considered in multiplicative notation, and that
	  $\phi$ is $\eta$-derived as defined in Definition~\ref{def:Eq_Xext}.
	  Let $S$ be a surface ribbon, and let us denote by $\vec D$ a diagram of $S$ with some 
	 orientation.  For each coloring  $\mathcal C$ 
	 of $\vec D$, we define the following Boltzmann weights. At each crossing $\tau$ as in Figure~\ref{fig:positivenegative}, let $x,y\in X$ be colors assigned to oriented arcs as depicted.
	 We set $B_{\phi,\eta}(\mathcal C,\tau) := \phi(x,y)^{\sigma(\tau)}$, where $\sigma(\tau)$ denotes the sign of $\tau$ according to the convention established in Figure~\ref{fig:positivenegative}. 
	 
	 For each vertex $v$ of $\vec D$, 
	 let $\vec{a}_i$, $i=1,2,3$, be the incident edges at $v$, labeled in clockwise order
	 around $v$. Set 
	 $ \sigma(v) = 
	{\epsilon(\vec{a}_1, v)}+{\epsilon(\vec{a}_2, v)}+{\epsilon(\vec{a}_3, v)}$.
	 Let ${\mathcal C} (a_1)=x^{\epsilon(\vec{a}_1, v)}$,  ${\mathcal C} (a_2)=y^{\epsilon(\vec{a}_2, v)}$, and ${\mathcal C} (a_3)=z^{\epsilon(\vec{a}_3, v)}$.
	 Here we take a convention that $x^{-1}$ represents $\rho(x)$ for $x\in X$. 
	 The coloring condition implies that $x^{\epsilon(\vec{a}_1, v)}y^{\epsilon(\vec{a}_1, v)}y^{\epsilon(\vec{a}_1, v)}=1$.
	 Then we set the weight at $v$ for a coloring ${\mathcal C}$ to be $B_{\phi,\eta} (\mathcal C, v) := \eta(\nu(x),\nu(y))^{\sigma(v)}$.
	 When the edges are ordered counterclockwise, 
	 $\vec{a}_i\, '$, $i=1,2,3$, then the weight is defined to be 
	 $B_{\phi,\eta} (\mathcal C, v) := \eta(\nu(y),\nu(x))^{- \sigma(v)}=
	 \eta( \ \nu ( {\mathcal C} (\vec{a_2}\, ') ) ,   \nu ( {\mathcal C} (\vec{a_1}\, ') )\ )^{- \sigma(v)} $.
 Results similar to the following can be found in \cite{CIST,IIJO}.

\begin{lemma}
The weight  $B_{\phi,\eta} (\mathcal C, v) = \eta(\nu(x),\nu(y))^{\sigma(v)}$ does not depend on 
the choices of orientations of the three edges incident to $v$, nor the two entries $(\vec{a}_1, \vec{a}_2) $.
\end{lemma}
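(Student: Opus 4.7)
The plan is to reduce both assertions to the triangle symmetries of the normalized group $2$-cocycle $\eta$ recorded in Lemma~\ref{lem:gpsymmetry}(1) and visualized diagrammatically in Figure~\ref{triangle}, together with the vertex coloring condition $(C3)$. Using the convention $x^{-1}=\rho(x)$, the latter may be rewritten on the all-in representatives as $\nu(x)\nu(y)\nu(z)=e$, so that writing $g=\nu(x)$, $h=\nu(y)$, $k=\nu(z)$ places the three group elements on a coherently oriented triangle with $ghk=e$.

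For independence from the choice of the ordered consecutive pair, I would first establish the cyclic chain $\eta(g,h)=\eta(h,k)=\eta(k,g)$ whenever $ghk=e$. One application of the triangle identity gives $\eta(g,h)=-\eta(k^{-1},h^{-1})$; applying the $2$-cocycle relation to the triple $(k^{-1},h^{-1},g^{-1})$, whose product is also $e$, and using the normalization identities $\eta(e,\,\cdot\,)=\eta(\,\cdot\,,e)=\eta(a,a^{-1})=0$, yields $\eta(k^{-1},h^{-1})=\eta(h^{-1},g^{-1})$; a further triangle identity rewrites $\eta(h^{-1},g^{-1})=-\eta(k,g)$, giving $\eta(g,h)=\eta(k,g)$, with the remaining equality coming from the same computation applied cyclically. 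Since $\sigma(v)$ depends only on the unordered set $\{\epsilon_1,\epsilon_2,\epsilon_3\}$ and is insensitive to the cyclic choice of starting pair, raising the common value to the power $\sigma(v)$ produces the same weight for each of the three consecutive pairings.

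For invariance under reversing the orientation of a single incident edge, say $\vec{a}_1$, rule $(C1)$ replaces the color ${\mathcal C}(\vec{a}_1)$ by $\rho({\mathcal C}(\vec{a}_1))$ and $\epsilon_1$ by $-\epsilon_1$; a direct check shows that the all-in representative $x$ (defined by ${\mathcal C}(\vec{a}_1)=x^{\epsilon_1}$) is preserved, so the entries $\nu(x)$ and $\nu(y)$ of $\eta$ in the formula are unchanged and only $\sigma(v)$ shifts by $-2\epsilon_1$. The triangle identities of Lemma~\ref{lem:gpsymmetry}(1), as codified diagrammatically in Figure~\ref{triangle}, express precisely how the value of $\eta$ evaluated on the triangle transforms under reversing one of its sides, and the sign they introduce matches the effect of this exponent shift, so the new weight coincides with the old. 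Iterating over the $2^3=8$ orientation patterns of the three incident edges reduces every case to the reference all-in pattern, and the clockwise-versus-counterclockwise case is built into the definition of the weight and follows from the cyclic invariance of the previous paragraph applied to the reversed cyclic order.

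The main technical obstacle is the sign bookkeeping in the reversal step: each application of the triangle identity contributes a $-1$, and these signs must exactly match the $\pm 2$ shifts of $\sigma(v)$ produced by successive edge reversals. Once this matching is made transparent in one representative orientation pattern, the remaining seven cases follow by symmetry and iteration of the same argument, completing the verification.
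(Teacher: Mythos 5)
Your first step --- the cyclic identity $\eta(g,h)=\eta(h,k)=\eta(k,g)$ whenever $ghk=e$, obtained from the triangle symmetry of Lemma~\ref{lem:gpsymmetry}(1) together with the $2$-cocycle condition on $(k^{-1},h^{-1},g^{-1})$ and the normalization identities --- is correct, and it is essentially the computation the paper carries out to show independence of the choice of consecutive pair.

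The orientation-reversal step, however, contains a genuine gap. You correctly observe that reversing $\vec a_1$ preserves the all-in representative $x$ (both $\mathcal C(\vec a_1)$ and $\epsilon(\vec a_1,v)$ flip, so $x$ defined by $\mathcal C(\vec a_1)=x^{\epsilon(\vec a_1,v)}$ is fixed); hence the arguments $\nu(x),\nu(y)$ fed into $\eta$ are \emph{unchanged} and only the exponent $\sigma(v)$ shifts by $-2\epsilon(\vec a_1,v)$. But then no identity satisfied by $\eta$ can restore equality: you would literally need $\eta(\nu(x),\nu(y))^{\sigma(v)}=\eta(\nu(x),\nu(y))^{\sigma(v)-2\epsilon(\vec a_1,v)}$, i.e.\ $\eta(\nu(x),\nu(y))^{2}=1$, which fails for a general normalized cocycle. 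The triangle identities of Figure~\ref{triangle} act by changing the \emph{entries} of $\eta$ (inverting the group element on the reversed side and re-pairing), so they only become available if the reversal is accompanied by re-reading the weight from the reversed colors in the reversed cyclic order. That is exactly how the paper proceeds: it never treats a single reversal with the entries held fixed, but reverses two of the three edges at once, reads the pair counterclockwise through the inverted colors as $\eta(h^{-1},g^{-1})^{-1}$, and invokes the identity $\eta(g,h)=\eta(h^{-1},g^{-1})^{-1}$ (itself a consequence of the cyclic identity and the triangle symmetry), combining this with cyclic permutations of the pair. Your sentence asserting that ``the sign they introduce matches the effect of this exponent shift'' is precisely the claim that requires proof, and in the configuration you set up (entries fixed, exponent shifted) it is not true; you identify this matching as the main obstacle but do not carry it out. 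To repair the argument, either follow the paper's route (reversals of pairs of edges, read in the opposite cyclic order), or redo the single-reversal case by expressing the new weight through $\nu(\rho(x))=\nu(x)^{-1}$ and a re-chosen ordered pair before applying Lemma~\ref{lem:gpsymmetry}(1).
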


\begin{proof}
Set $\nu(x)=g$, $\nu(y)=h$ and $\nu(z)=k$ and first assume that all edges point into $v$,
so that all signs are positive.
Then the coloring condition is written as 
$g h  k =e$, hence 
we have that $B_{\phi,\eta} (\mathcal C, v) =\eta( {\mathcal C} (\vec{a}_1), {\mathcal C }( \vec{a}_2) )=\eta( \eta(g,h))^{\sigma(v)}
=\eta(g,h)$. By Lemma~\ref{lem:gpsymmetry}, we have $\eta(g,h)=\eta(gh, h^{-1})^{-1}=\eta(g^{-1} , gh)^{-1}$ in multiplicative notation. Setting $g'=gh$ and $h'=h^{-1}$ 
in $\eta(g',h')=\eta(g'^{-1} , g'h')^{-1}$, 
  we obtain 
$$\eta (g', h') = \eta (g'^{-1}, g'h')^{-1}=\eta( (gh)^{-1}, g)^{-1}=\eta( k, g)^{-1},$$
and from  $\eta(g,h)=\eta(gh, h^{-1})^{-1}=\eta (g', h')^{-1}$, 
we obtain  $\eta(g,h)=\eta(k,g)=\eta( \nu(z), \nu(x))=\eta( {\mathcal C} (\vec{a}_3), {\mathcal C }( \vec{a}_1) )$. 
 By cyclic symmetry we obtain that 
$B_{\phi,\eta} (\mathcal C, v) $ does not depend on the choice of $(\vec{a}_1, \vec{a}_2), (\vec{a}_2, \vec{a}_3), (\vec{a}_3, \vec{a}_1)$. 

We also have $\eta(g,h)=\eta(gh, h^{-1})^{-1}=\eta(k^{-1}, h^{-1})^{-1}$, and together 
with $\eta(g,h)=\eta(h,k)$, we obtain $\eta(k^{-1}, h^{-1})^{-1}=\eta(h,k)$ for all $h,k$.
Hence we have $\eta(g,h)=\eta(h^{-1}, g^{-1})^{-1}$.
If the orientations of $\vec{a}_1$ and $\vec{a}_2$ are reversed, then by reading inverse colors counterclockwise from 
$\vec{a_2}\, '$ to $\vec{a_1}\, '$  we 
obtain that 
$$B_{\phi,\eta} (\mathcal C, v) 
= \eta( h^{-1}, g^{-1})^{-1}=\eta(g,h). 
$$
Since cyclic permutations of choices or arcs and reversing orientations of two out of three arcs
generate all symmetries, we obtain the claim.
\end{proof}

One of the conditions posed to define the cocycle invariant is pre-additivity.
We show that it is satisfied by $\eta$-derived rack 2-cocycles.
For this purpose we identify the $\eta$-derivability to the construction given in \cite{CJKLS}.


Let $G$ be a group equipped with the conjugation rack structure, and let $A$ be an abelian group.
In \cite{CJKLS}, it was shown that, for a group 2-cocycle $\eta \in Z^2_G(G,A)$,
the function $\phi' (g,h):= \eta( g,h) - \eta(h, h^{-1}gh)$ is a rack 2-cocycle.
When $X=G$ is the conjugation quandle, it is regarded as an augmented quandle by 
$\nu={\rm id}_G$. Then  the $\eta$-derived 2-cocycle $\phi$ is regarded as an element of $Z^2_R(X,A)$. 

\begin{lemma}\label{lem:CJKLS}
Let $G$ be a group, also regarded as a conjugation rack, and $A$ be an abelian group as above.
If $\eta \in Z^2_G(G,A)$ is normalized, then the two rack 2-cocycle constructions coincide:
$$ \phi' (g,h):= \eta( g,h) - \eta(h, h^{-1}gh) = 
 \eta(h^{-1}, g) + \eta(h^{-1} g , h) =: \phi(x,y) .$$ 
\end{lemma}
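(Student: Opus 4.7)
The plan is to derive the identity by one application of the group $2$-cocycle condition followed by one application of the triangle symmetry from Lemma~\ref{lem:gpsymmetry}. Both sides of the target equality are linear combinations of just a few values of $\eta$, so the claim should reduce to a single linear relation, and no induction or case analysis is needed.

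First, I would apply the cocycle identity $(\delta^2_G \eta) = 0$ to the triple $(h^{-1}, g, h)$. Using the coboundary formula recorded in Section~\ref{sec:pre}, this expands to
$$\eta(g,h) + \eta(h^{-1}, gh) - \eta(h^{-1}g, h) - \eta(h^{-1}, g) = 0,$$
which rearranges into
$$\eta(h^{-1}, g) + \eta(h^{-1}g, h) = \eta(g,h) + \eta(h^{-1}, gh).$$
The left-hand side is already $\phi(g,h)$, so it remains to identify $\eta(h^{-1}, gh)$ with $-\eta(h, h^{-1}gh)$.

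Second, I would invoke the triangle symmetry $\eta(a,b) = -\eta(a^{-1}, ab)$ from Lemma~\ref{lem:gpsymmetry}(1), specialized to $(a,b) = (h, h^{-1}gh)$. Since $ab = h \cdot h^{-1}gh = gh$, this yields exactly $\eta(h, h^{-1}gh) = -\eta(h^{-1}, gh)$. Substituting into the previous display gives
$$\phi(g,h) \;=\; \eta(g,h) - \eta(h, h^{-1}gh) \;=\; \phi'(g,h),$$
as required. The normalization hypothesis on $\eta$ enters only through its use in Lemma~\ref{lem:gpsymmetry}.

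The main obstacle here is essentially cosmetic rather than conceptual: the only nontrivial choice in the argument is picking the input triple $(h^{-1}, g, h)$ for the cocycle relation. This choice is however dictated by inspection of the products $h^{-1}g$ and $h^{-1}gh$ appearing on the target side, so the proof is a direct two-step computation.
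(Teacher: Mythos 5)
Your proposal is correct and follows essentially the same route as the paper: both arguments combine a single instance of the group $2$-cocycle condition with one application of the triangle symmetry of Lemma~\ref{lem:gpsymmetry}(1), the paper invoking the cocycle condition in the packaged form of Lemma~\ref{lem:gpsymmetry}(2) with $k=h$ while you apply it directly to the triple $(h^{-1},g,h)$. The two instantiations differ only cosmetically, and your observation that normalization enters solely through Lemma~\ref{lem:gpsymmetry} matches the paper's use of it.
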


\begin{proof}
By setting $z=y$ in Item (2) of Lemma~\ref{lem:gpsymmetry}, we have
$\eta( g,h) - \eta(h, h^{-1}gh)= \eta(h^{-1} x, y ) - \eta(h, h^{-1} g)$. 
By the first equality in Item (1), we have $ \eta(h, h^{-1} g) = - \eta(y^{-1} , x)$, as desired.
\end{proof}

\begin{lemma}\label{lem:additive}
Let $(X,G)$ be an augmented rack, and $A$ an abelian group.
If $\phi \in Z^2_{RF}(X,A)$ is $\eta$-derived for some $\eta\in Z^2_G(G,A)$
which is normalized,
then $\phi $ is additive.
\end{lemma}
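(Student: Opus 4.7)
The plan is to reduce the additivity identity for $\phi$ to an identity purely on $G$, and then verify that identity by applying the group 2-cocycle condition for $\eta$ a small, fixed number of times. Throughout, I revert to additive notation for $\eta$ and its coboundary.

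First, since $\phi$ is $\eta$-derived, Remark~\ref{rem:totallyfib} shows that $\phi$ is totally fibrant, so it descends to a map $\bar\phi\colon G\times G\to A$ with $\phi(x,y)=\bar\phi(\nu(x),\nu(y))$. Because $\eta$ is normalized, the two constant terms in Definition~\ref{def:etaderived} vanish, and Lemma~\ref{lem:CJKLS} lets me replace $\bar\phi(g,h)=\eta(h^{-1},g)+\eta(h^{-1}g,h)$ by the equivalent CJKLS form
\[
\bar\phi(g,h)\;=\;\eta(g,h)\,-\,\eta(h,h^{-1}gh),
\]
whichever is more convenient. Given $w,x,y,z\in X$ with $\nu(x)\nu(y)=\nu(z)$, set $g=\nu(w)$, $h=\nu(x)$, $k=\nu(y)$, so that $\nu(z)=hk$ and $\nu(w*x)=\nu(w)\cdot \nu(x)$ conjugated gives $h^{-1}gh$. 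Thus the additivity statement
$\phi(w,x)+\phi(w*x,y)=\phi(w,z)$ becomes the purely group-theoretic identity
\[
\bar\phi(g,h)\,+\,\bar\phi(h^{-1}gh,\,k)\;=\;\bar\phi(g,\,hk).
\]

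Next I expand both sides using the CJKLS form of $\bar\phi$, and verify the identity by applying the 2-cocycle relation $\eta(a,b)+\eta(ab,c)=\eta(a,bc)+\eta(b,c)$ to three carefully chosen triples: namely $(a,b,c)=(g,h,k)$, $(a,b,c)=(h,h^{-1}gh,k)$, and $(a,b,c)=(h,k,k^{-1}h^{-1}ghk)$. Summing the three cocycle relations produces cancellations of the terms $\eta(gh,k)$, $\eta(h,h^{-1}ghk)$, and $\eta(h,k)$, leaving exactly the desired equality
\[
\eta(g,h)-\eta(h,h^{-1}gh)+\eta(h^{-1}gh,k)-\eta(k,k^{-1}h^{-1}ghk)
\;=\;\eta(g,hk)-\eta(hk,k^{-1}h^{-1}ghk).
\]

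The main obstacle is the bookkeeping in this last step: one has to choose the right triples to feed into the cocycle relation so that all cross-terms telescope. Choosing the triples as above (guided by the conjugation pattern $gh\cdot k \leftrightarrow h\cdot h^{-1}ghk$) makes the cancellation immediate. The hypothesis that $\eta$ is normalized is used only to discard the constant terms in the $\eta$-derived formula (so that $\bar\phi$ takes the clean CJKLS shape); no further use of normalization is required for the calculation itself.
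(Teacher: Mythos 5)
Your proof is correct and takes essentially the same route as the paper: both reduce additivity to an identity on $G$ via total fibrancy, replace the $\eta$-derived formula by the CJKLS form $\eta(g,h)-\eta(h,h^{-1}gh)$ using Lemma~\ref{lem:CJKLS}, and then discharge the resulting identity by a handful of applications of the group $2$-cocycle condition (the paper organizes these as a telescoping chain after adding $\eta(g,h)$ to both sides, as pictured in Figure~\ref{squares}). One small point of bookkeeping: the combination of your three cocycle relations that yields the stated cancellations is the alternating one, first minus second plus third, rather than a plain sum, but your triples are chosen correctly and the identity checks out.
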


\begin{proof}
Since $\phi$ is totally fibrant if 
$\phi$ is $\eta$-derived by Remark~\ref{rem:totallyfib}, 
 we use group elements 
for the variables for $\phi$, so that we denote $\phi(x,y)$, $x,y \in X$ by 
$\phi(g,h)$, where $g=\nu(x)$ and $h=\nu(y)$, $g,h \in G$. 
To show the additivity $\phi(k, g)+\phi(g^{-1}kg, h)=\phi(k, gh)$, 
we show 
$$\phi(k, g)+\phi(g^{-1}kg, h) + \eta(g,h) =\phi(k, gh) + \eta(g,h) .$$
By substituting the formula for $\phi'$ in Lemma~\ref{lem:CJKLS} instead of defining formula of $\eta$-derivability, we compute
\begin{eqnarray*}
\lefteqn{ \phi(k, g)+\phi(g^{-1}kg, h) + \eta(g,h) }\\
&=&
[ \eta(k,g)-\eta(g, g^{-1}kg) ] + [
\underline{  \eta( g^{-1}kg, h) - \eta( h, h^{-1} g^{-1} kgh)}
 ] +  \eta(g,h) \\
&=&
[ \eta(k,g) \underline{ -\eta(g, g^{-1}kg)} ] + [
\underline{  \eta( g^{-1}kg,  g^{-1}kg h ) } - \eta( g^{-1}kg h ,  h^{-1} g^{-1}kg h ) 
 ] + \underline{  \eta(g,h) } \\
 &=&
\underline{ \eta(k,g) + \eta(kg, g^{-1} k^{-1} gh)  }
- \eta( g^{-1} k^{-1} gh, h^{-1} g^{-1}kg h ) \\
&=&
\eta(  k, k^{-1} gh) +
\underline{ \eta (  g, g^{-1} k^{-1} g h ) - \eta( g^{-1} k^{-1} gh, h^{-1} g^{-1}kg h ) } \\
&=&
\underline{ \eta ( k, k^{-1} gh) + \eta(k^{-1} gh ,  h^{-1} g^{-1}kg h ) } + \eta(g,h) \\
&=&
\phi(k, gh) + \eta(g,h) 
\end{eqnarray*}
as desired. In Figure~\ref{squares}, the sequence of applications of 2-cocycle condition
is represented. Shaded regions correspond to the squares dual to crossings, 
and triangles correspond to $\eta$ evaluated by group elements labeling edges of triangles.
Diagrammatic representations of group 2-cocycle condition in Figure~\ref{assoc} and 
that of the triangle symmetry in Figure~\ref{triangle} are used in this computation.
\end{proof}

\begin{figure}[htb]
\begin{center}
\includegraphics[width=5in]{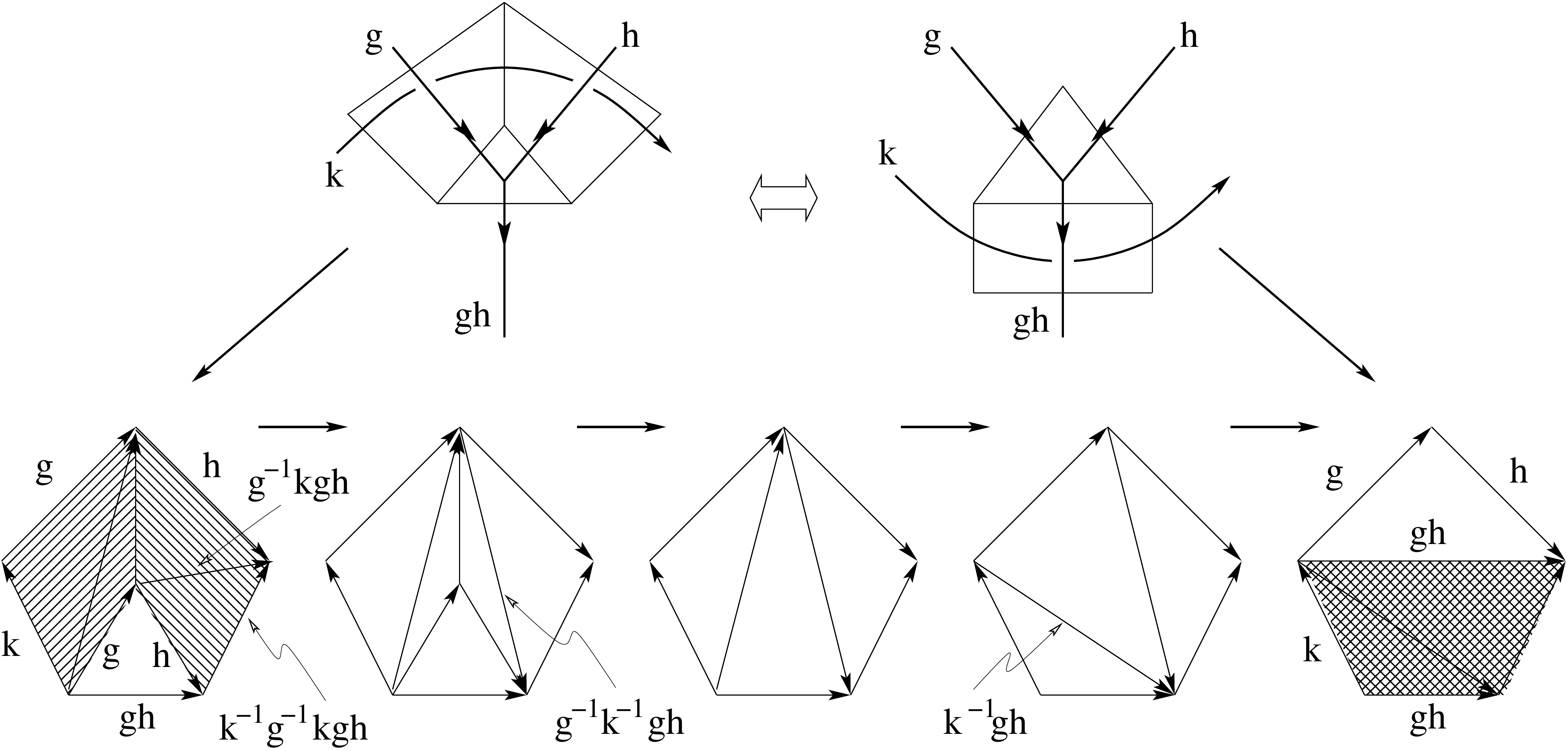}
\end{center}
\caption{Group 2-cocycles assigned to triangles and squares}
\label{squares}
\end{figure}

	 \begin{definition}
	 	{\rm 
	 	Let $X$ be an augmented rack with $\nu: X \rightarrow G$.
		Let $\phi \in Z^2_{\rm R}(X,A)$ be a rack 2-cocycle, and
		 let $\eta\in Z^2_{\rm G}(G,A)$ be a normalized group 2-cocycle,
		 such that such that $\phi$ is $\eta$-derived.  
		Assume further that $\phi$ is additive and symmetric with respect to $\rho$.
		Let $S$ be a connected surface ribbon and let $\vec D$ denote a diagram of $S$. Then the cocycle invariant of $S$ is defined as
	 	$$
	 	\Psi_{\phi,\eta}(S) = 
		\sum_{\mathcal C} 
		\prod_{\tau} B_{\phi,\eta}(\mathcal C, \tau)\prod_v B_{\phi, \eta} (\mathcal C, v),
	 	$$
	 	where the sums run over all the colorings, 
		and the products are taken over all the crossings and vertices of $\vec D$. If $S$ consists of multiple connected components, then one defines the same partition function for each connected component, and $\Psi_{\phi, \eta}(S)$ is understood to indicate the tuples containing all the partition functions corresponding to each connected component. 
 		}
	 \end{definition}
 
 \begin{proposition}\label{prop:Gextinv}
 	The cocycle invariant above is well defined with respect to the choice of 
	oriented diagram $\vec D$. Therefore it is an invariant of the isotopy class of the surface ribbon $S$. 
 \end{proposition}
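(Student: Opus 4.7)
The plan is to verify invariance under each of the moves in Figure~\ref{moves}, as well as under reversal of orientations of edges, using the new features introduced by the vertex weight $B_{\phi,\eta}(\mathcal C,v) = \eta(\nu(x),\nu(y))^{\sigma(v)}$ together with the crossing weight $B_{\phi,\eta}(\mathcal C,\tau) = \phi(x,y)^{\sigma(\tau)}$. First I would dispose of the moves that do not involve trivalent vertices: namely RII, RIII, and CL. On these, no vertex weight appears, and the argument for state sum invariance is identical to the classical quandle cocycle invariant proof of \cite{CJKLS}, together with the framing compensation used for framed links. I would also record that changing the orientation of a single edge leaves the contribution of $\phi$ at every crossing unchanged by the symmetry of $\phi$ with respect to the good involution $\rho$, while the vertex weight is independent of the chosen orientation and ordering of incident arcs by the preceding lemma. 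This reduces the problem to the three genuinely new moves YI, IY, and IH.

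Next I would address the YI and IY moves (Figures~\ref{YIrack} and~\ref{IYrack}). Here the bijection of colorings was already established in the proof of Theorem~\ref{thm:inv}, so it suffices to match Boltzmann weights under a fixed bijection. On one side the contribution consists of the crossing weights $\phi(w,x)$ and $\phi(w*x,y)$ (or symmetric versions) together with the vertex weight $\eta(\nu(x),\nu(y))$, while on the other side it consists of the single crossing weight $\phi(w,z)$ together with a vertex weight $\eta(\nu(w)^{-1}\nu(x)\nu(w),\nu(w)^{-1}\nu(y)\nu(w))$ obtained after pushing the trivalent vertex through the crossing. The key input is Corollary~\ref{cor:compatibility}, which rewritten multiplicatively is exactly the identity
\[
\phi(x,w)\,\phi(y,w)\,\eta(\nu(w)^{-1}\nu(x)\nu(w),\nu(w)^{-1}\nu(y)\nu(w)) = \phi(z,w)\,\eta(\nu(x),\nu(y))
\]
valid whenever $\nu(x)\nu(y)=\nu(z)$. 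This precisely equates the product of weights on the two sides of the YI move; the IY move reduces to the same identity by reversing the diagram vertically and using the $\rho$-symmetry of $\phi$. Additivity of $\phi$ (Lemma~\ref{lem:additive}) is used in one of the intermediate simplifications so that the color labels determined by the coloring condition can be compared across the two sides.

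For the IH move (Figure~\ref{IH1}) no crossings appear, so only the vertex weights contribute. The bijection of colorings requires Lemma~\ref{lem:bij}, which ensures that the sets of admissible interior colors $u \in \nu^{-1}(\nu(x)\nu(y))$ on the left and $v \in \nu^{-1}(\nu(y)\nu(z))$ on the right have the same cardinality, and moreover that the vertex weight depends only on the images under $\nu$. After this identification the equality of state sums reduces to the single identity
\[
\eta(\nu(x),\nu(y))\,\eta(\nu(x)\nu(y),\nu(z)) = \eta(\nu(x),\nu(y)\nu(z))\,\eta(\nu(y),\nu(z)),
\]
which is the (multiplicative form of the) group $2$-cocycle condition satisfied by $\eta$. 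This gives invariance under IH.

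The main obstacle I anticipate is the YI/IY case, since this is the only place where the crossing data and the vertex data are genuinely coupled, and correctly tracking how the color conjugations under the action of $\nu(w)$ interact with the sign conventions at the trivalent vertex is delicate. Corollary~\ref{cor:compatibility} is tailored to this situation, so the proof becomes a careful diagrammatic application of that corollary together with Lemma~\ref{lem:additive}; once this step is in place, the remaining verifications are routine.
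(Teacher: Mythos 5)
Your proposal is correct and takes essentially the same route as the paper: classical arguments for the moves without trivalent vertices, the additivity of $\phi$ together with Corollary~\ref{cor:compatibility} (i.e.\ $\eta$-derivability) for the YI/IY pair, and the fiber bijection of Lemma~\ref{lem:bij} plus the group $2$-cocycle condition for IH --- indeed you are slightly more explicit than the paper on IH, where the identity $\eta(\nu(x),\nu(y))\,\eta(\nu(x)\nu(y),\nu(z))=\eta(\nu(x),\nu(y)\nu(z))\,\eta(\nu(y),\nu(z))$ matching the two products of vertex weights is left implicit. One small bookkeeping remark: additivity is what matches the weights for the YI move (the arc $w$ passes under the vertex, the crossing weights are $\phi(w,x)\,\phi(w*x,y)$ versus $\phi(w,z)$, and the vertex weight $\eta(\nu(x),\nu(y))$ is unchanged), while Corollary~\ref{cor:compatibility} is what matches them for the IY move (the vertex passes under the arc, so its weight gets conjugated to $\eta(\nu(w)^{-1}\nu(x)\nu(w),\nu(w)^{-1}\nu(y)\nu(w))$); your second paragraph interleaves the two configurations, but since you invoke both ingredients the argument goes through.
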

\begin{proof}
The invariance under the moves that do not involve trivalent vertices are proved in the same manner as the original quandle cocycle invariant \cite{CJKLS}, and the same argument as the proof 
of Proposition~\ref{prop:Xextcocyinv}. 

Invariance under the YI move  follow from the additivity of $\phi$ and the $\eta$-derivability of $\phi$ 
as Figures~\ref{YIrack} and \ref{IYrack} indicates, respectively, and as the relation between the moves and these conditions were discussed in Section~\ref{sec:rack_group}.
 In particular, additivity is shown in Lemma~\ref{lem:CJKLS}.

For the IH move, the bijection between the set of colorings involve the possible colors for $u$ in LHS and $v$ in RHS in Figure~\ref{IH1} as explained in Proof of Theorem~\ref{thm:inv}. 
Since all possible choices for $u$ are from the fiber 
$\nu^{-1} (\nu(x)\nu(y))$, the value of the cocycle evaluation involved remains unchanged, 
and so does for $v$ in RHS. 
Hence the state sum stays invariant.
\end{proof}

\begin{example}
	{\rm 
		Let us consider the $2$-cocycles $\phi$ constructed in Example~\ref{ex:central_extension}.
Then by Lemma~\ref{lem:additive}, $\phi$ is additive.
Hence by Proposition~\ref{prop:Gextinv},
$\phi$ defines a cocycle invariant.
Thus the cocycle invariant for surface ribbons can be defined from any augmented racks through 
central group extensions.
	
}
\end{example}

Although it is desirable to investigate these  invariants about the relations to other invariants 
and novel applications, these are beyond the scope of this paper, and it is left to future studies.

   \bigskip        	

%

\end{document}